\documentclass[final,leqno,onetabnum]{siamltex}
\usepackage{stmaryrd}

\usepackage{amsmath}
\usepackage{amsfonts}
\usepackage{amssymb}
\usepackage{epsfig}
\usepackage{latexsym}
\usepackage{exscale}
\usepackage{subfigure}

\usepackage{graphicx}

\usepackage{color}
\usepackage[normalem]{ulem}

\def\k{\kern .5em}
\def\er{\kern .2em}

\newcommand{\be}{\begin{equation}}
\newcommand{\ee}{\end{equation}}
\newcommand{\ba}{\begin{array}}
\newcommand{\ea}{\end{array}}
\newcommand{\bea}{\begin{eqnarray*}}
\newcommand{\eea}{\end{eqnarray*}}
\newcommand{\bean}{\begin{eqnarray}}
\newcommand{\eean}{\end{eqnarray}}

\newtheorem{Algorithm}{Algorithm}[section]

\newcommand{\lc}{\mathrel{\raise2pt\hbox{${\mathop<\limits_{\raise1pt\hbox{\mbox{$\sim$}}}}$}}}
\newcommand{\gc}{\mathrel{\raise2pt\hbox{${\mathop>\limits_{\raise1pt\hbox{\mbox{$\sim$}}}}$}}}
\newcommand{\ec}{\mathrel{\raise1pt\hbox{${\mathop=\limits_{\raise2pt\hbox{\mbox{$\sim$}}}}$}}}

\newcommand{\Label}[1]{\label{#1}{{\mbox{$\;$\fbox{\tiny\tt #1}$\;$}}}}
\renewcommand{\Label}[1]{\label{#1}}
\renewcommand{\theequation}{\arabic{section}.\arabic{equation}}
\renewcommand{\thetheorem}{\arabic{section}.\arabic{theorem}}
\renewcommand{\thelemma}{\arabic{section}.\arabic{lemma}}
\renewcommand{\theproposition}{\arabic{section}.\arabic{proposition}}

\setcounter{topnumber}{5} \setcounter{bottomnumber}{5}
\setcounter{totalnumber}{10}

\setlength{\floatsep}{15pt plus 12pt minus 4pt}
\setlength{\textfloatsep}{15pt plus 12pt minus 4pt}
\title{A Parallel Orbital-Updating Approach for Electronic Structure Calculations
\thanks{This work was partially
supported by  the Funds for Creative Research Groups of China under grant 11321061,
the National Basic Research Program of China under grant
2011CB309703, the National Science Foundation of China
under   grants 11101416 and 91330202, the National 863 Project of China
under grant 2012AA01A309, and the National Center for Mathematics and Interdisciplinary Sciences of Chinese Academy of Sciences.}}

\author{\ Xiaoying Dai\thanks{LSEC,
Institute of Computational Mathematics and Scientific/Engineering
Computing, Academy of Mathematics and Systems Science, Chinese
Academy of Sciences, Beijing 100190, China ({daixy, azhou, jwzhu@lsec.cc.ac.cn}).}
\and Xingao Gong\thanks{ Department of
Physics, Fudan University, Shanghai 200433, China
(xggong@fudan.edu.cn).}
\and Aihui Zhou$^\dag$
\and Jinwei Zhu$^\dag$
}

\begin{document}


\maketitle

\begin{abstract}
In this paper, we propose an orbital iteration based parallel approach for electronic structure calculations. This approach is based on our understanding of the single-particle equations of independent particles that move in an effective potential. With this
 new approach, the solution of the single-particle equation is reduced to some solutions of
 independent linear algebraic systems and a small scale algebraic problem.
It is demonstrated by our numerical
experiments that this new approach is quite efficient for
full-potential calculations for a class of molecular systems.
\end{abstract}

\begin{keywords}
eigenvalue, eigenspace, electronic structure,  finite
element, full-potential calculation, parallel orbital-updating
\end{keywords}

\begin{AM}
35Q55, 65N25, 65N30, 65N50, 81Q05
\end{AM}

\pagestyle{myheadings}

\thispagestyle{plain}

\markboth {XIAOYING DAI, XINGAO GONG, AIHUI ZHOU, AND JINWEI ZHU, } {Parallel Orbital Updating}

\section{Introduction}\label{sec:intro}
\setcounter{equation}{0}

The many-body Schr{\" o}dinger equation for electronic structure is usually
intractable. In applications,  simplified and equivalent models that
are tractable are then desired and
 proposed. Among them there are  single-particle
approximations, such as Hartree-Fock type equation and Kohn-Sham
equations\cite{kaxiras03,kohn-sham65,lebris03,martin04,parr-yang-94}.

Within the framework of the single-particle approximation, the original intractable many-body
Schr{\" o}dinger equation is reduced to  a set of tractable single-particle equations of independent particles
that move in an effective potential, while the effective potential includes an external potential defined by
the nuclei or ions
and the effects of the electron-electron interactions. It is a quite good description
when effects of exchange and correlation are not crucial for describing the phenomena required.
The Kohn-Sham equation, for instance, is the group of single-particle equations for non-interacting quasi-particles,
whose density is the same as the exact density of real electrons. The various physical quantities can be expressed in terms of the single-particle orbitals. With these single-particle approximations, we observe that the orbitals may be computed individually or
in parallel.

The main philosophy behind this paper is that a fundamental algorithm must be built on simple rules. Indeed, supercomputers would otherwise be unable to cope with it efficiently.
We understand that the rules of fundamental physical theories on motions are very simple while the complexity of a system results from its specificity of initial condition, like Newtonian laws. In our approach, we try to use this kind of idea for
designing discretization schemes.
We note that an iteration process may be
viewed as (a discretized version of ) some motion.  Motivated by such a consideration and observation, in this paper, we will propose a new parallel approach for electronic structure calculations based on finite element discretizations and some simple
 iterations.
 The approach may be viewed as a single-particle orbital updating algorithm.
 It is shown by our investigation that a simple iteration
 with some observational data would be efficient for a special situation and supercomputers.
One prototype  algorithm for electronic structure
calculations based on finite element discretizations is stated as follows (see Section \ref{ParO}):
\begin{Algorithm}\label{algorithm1}
\begin{enumerate}
\item Given initial data  $(\lambda^{(0)}_i, u_i^{(0)})\in \mathbb{R}\times H^1_0(\Omega)$
with 
 $(u^{(0)}_i, u^{(0)}_j)_{\Omega}=\delta_{ij}, (i, j=1,2,\cdots,N)$, define  $\mathcal
{T}_0$ and $V_0$, and let $n=0$

\item Construct $\mathcal {T}_{n+1}$ and $V_{n+1}$ based on an  adaptive procedure  to $(\lambda^{(n)}_i, u_i^{(n)})$.

\item For $i=1,2,\cdots,N$, find $u^{(n+1/2)}_i\in V_{n+1}$ satisfying
 \begin{eqnarray*}
 a(U^{(n)}; u^{(n+1/2)}_i, v)= \lambda_i^{(n)}(u_i^{(n)},v) ~\forall v\in V_{n+1}
\end{eqnarray*}
in parallel.

\item Project to eigenspace: find $(\lambda^{(n+1)},u^{(n+1)})\in \mathbb{R}\times {\tilde V}_{n+1}$ satisfying
$\|u^{(n+1)}\|_{0,\Omega}=1$ and  \begin{eqnarray*}
a(U^{(n+1/2)}; u^{(n+1)},v)=\lambda^{(n+1)}
(u^{(n+1)},v) ~~\forall v\in {\tilde V}_{n+1}
\end{eqnarray*}
to obtain eigenpairs
$(\lambda^{(n+1)}_i,u^{(n+1)}_i)(i=1,2,\cdots,N).$

\item Let $n=n+1$ and go to Step 2.
\end{enumerate}
Here  ${\tilde V}_{n+1}=~\mbox{span}~ \{u^{(n+1/2)}_1,u^{(n+1/2)}_2,\cdots,u^{(n+1/2)}_N\}$,  $U^{(n)} = (u_1^{(n)},
u_2^{(n)},\cdots, u_N^{(n)})$, $U^{(n+1/2)} = (u_1^{(n+1/2)},u_2^{(n+1/2)},\cdots, u_N^{(n+1/2)})$,
  and $a(\cdot;\cdot,\cdot)$ is the
nonlinear variational form associated the Kohn-Sham equation defined
in Section \ref{KS-equation}.
\end{Algorithm}

We understand that modern computational science does not have an algorithm
setting up initial conditions, it can only determine initial
conditions through physical observation and data. In Step 1 of Algorithm \ref{algorithm1}, we may choose

\begin{itemize}
\item Gaussian-type orbital or Slater-type orbital based guesses, which are applicable to full-potential calculations,

\item local plane-wave discretization based guesses, which are applicable to pseudo-potential settings,

\item local finite element/volume discretization based guesses, which are applicable to either
full-potential calculations or pseudo-potential settings.
\end{itemize}
Step 2 is used to deal with the singularity of Coulomb potentials or the highly oscillating
behaviors of eigenfunctions. Step 3 is to solve some source problems while Step 4 is an eigenvalue problem of small scale.

Note that Kohn-Sham equation is a nonlinear eigenvalue problem. To handle the
nonlinearity,
 the so called self-consistent field (SCF) (see, e.g.,
\cite{kresse-furthmuller-96, payne-92}) iteration approach is usually
applied. After some discretization, the central computation of
such kind of nonlinear eigenvalue problems is the repeat computation
of the following algebraic eigenvalue problem $$A u  = \lambda B
u,$$ where $A$ is the stiff matrix, $B$ is the mass matrix. For the above
algebraic  eigenvalue problem, we need to solve the first $N$
eigenvalues and the corresponding eigenfunctions or eigenspaces.  If $A$ and $B$ are
sparse, then the optimal computational complexity is
$\mathcal{O}(N^2 N_g)$, while if $A$ or $B$ is dense, then the
optimal computational complexity is $\mathcal{O}(N N_g^2)$.
Here $N_g$ is the
dimension of the matrix. Using a finite element, finite difference
or finite volume method to discretize Kohn-Sham equation is the former
case, while using plane wave functions as the bases or use Gaussian
type bases belong to the latter case. Usually, one need to solve
tens of such algebraic eigenvalue problem, and either $N_g \gg N$ or a practically complete
basis set is difficult to obtain \cite{andersen75,blochl94,bowler-miyazaki12,losilla-sundholm12,troullier-martins91,vanderbilt90}.
So, the cost will be the lowest when $N_g$ becomes $N$.

With this new algorithm, we see that the solution of the
original tens of large scale eigenvalue problems will be reduced to
the solution of some independent source problems and some eigenvalue
problem of small scales. Consequently, the optimal computational
complexity becomes $\mathcal{O}(N_g + N^3)$, which
is much lower than either $\mathcal{O}(N^2 N_g)$ or
$\mathcal{O}(N N_g^2)$. Besides, since the $N$ source problems in Step 3 are
independent each other, they can be calculated in parallel
intrinsically. This indicates that our algorithm can use more processors and hence possesses a supercomputing potential.

The rest of this paper is organized as follows. In Section \ref{sec-preliminaries},
we provide some preliminaries for Kohn-Sham DFT problem setting.
We then propose our new parallel orbital-updating approach for electronic structure calculations
in Section \ref{sec-algorithm-ks}. In Section \ref{sec-numerical}, we present some numerical experiments
that show the efficiency of our new algorithm. Finally, we give some concluding remarks and an appendix.

\section{Preliminaries}\label{sec-preliminaries}
Let $\Omega\subset \mathbb{R}^3$ be a polyhedral domain.
We shall use the standard notation for Sobolev spaces $H^{s}(\Omega)(1\le s<\infty)$ and their
associated norms and seminorms, see, e.g., \cite{adams75}. We denote  $H^1_0(\Omega)=\{v\in H^1(\Omega): v\mid_{\partial\Omega}=0\}$, where $v\mid_{\partial\Omega}=0$ is understood in
the sense of trace and $(\cdot,\cdot)$ is the standard $L^2$ inner product.

Let $\{\mathcal{T}_h\}$ be a shape regular family of nested
conforming meshes over $\Omega$ with size $h$ that is small enough:
there exists a constant $\gamma^{\ast}$ such that
\begin{eqnarray}\label{shape-regularity}
\frac{h_{\tau}}{\rho_{\tau}} \leq \gamma^{\ast} \quad\forall~\tau \in \mathcal{T}_h,
\end{eqnarray}
where $h_{\tau}$ is the diameter of $\tau$ for each $\tau\in \mathcal{T}_h$, $\rho_{\tau}$ is the
diameter of the biggest ball contained in $\tau$,
and $h=\max\{h_{\tau}: \tau\in \mathcal{T}_h\}$. Let $\mathcal{E}_h$ denote the set of interior faces
(edges or sides) of $\mathcal{T}_h$.

Let $S^{h,k}(\Omega)$ be a subspace of continuous functions on $\Omega$ such that
\begin{eqnarray*}
S^{h,k}(\Omega)=\{v\in C(\bar{\Omega}): ~v|_{\tau}\in P^k_{\tau}
\quad\forall~\tau \in \mathcal{T}_h\},
\end{eqnarray*}
where $P^k_{\tau}$ is the space of polynomials of degree no greater than $k$ over $\tau$.
$S^{h,k}(\Omega)$  are usually called finite element spaces.
Let $S^{h,k}_0(\Omega)=S^{h,k}(\Omega)\cap H^1_0(\Omega)$.
We shall denote $S^{h,k}_0(\Omega)$ by $S^{h}_0(\Omega)$ for simplification of notation afterwards.

\subsection{Adaptive finite element approximation}\label{afea}
To handle the Coulomb potential or the highly
oscillating behaviors of eigenfunctions  efficiently, we apply an adaptive
finite element approach to discretize the associated source problems. An
adaptive finite element  algorithm usually consists of the following
loop
\cite{cascon-kreuzer-nochetto-siebert08,chen-dai-gong-he-zhou13,dai-gong-yang-zhou-2011,
dai-xu-zhou08}:
$$
\mbox{\bf Solve}~\rightarrow~\mbox{\bf Estimate}~\rightarrow~\mbox{\bf Mark}~\rightarrow~\mbox{\bf Refine}.
$$

 We shall replace the subscript $h$ (or $h_k$) by an iteration
counter $k$  whenever convenient afterwards.

{\bf Solve.}
Get the piecewise polynomial finite element approximation with respect to a given
 mesh $\mathcal{T}_k$.

{\bf Estimate.} Given a mesh $\mathcal{T}_k$ and the corresponding output   from the
``Solve'' step, ``Estimate'' presents the a posteriori error estimators
$\{\eta_k(\cdot
,\tau)\}_{\tau\in\mathcal{T}_k}$. 

{\bf Mark.}
Based on the a posteriori error indicators
$\{\eta_k(\cdot,\tau)\}_{\tau\in\mathcal{T}_k}$, ``Mark" provides a strategy to
choose a subset $\mathcal{M}_k$ of elements  of $\mathcal{T}_k$ for refining.

{\bf Refine.} Associated with the mesh $\mathcal{T}_k$ and the set of marked elements $\mathcal{M}_k$,
``Refine'' produces a new mesh $\mathcal{T}_{k+1}$ by refining all elements
in $\mathcal{M}_k$ at least one time.

One of the most widely used marking strategy to enforce error reduction is the following so-called D\"{o}rfler strategy \cite{dorfler96}.

\vskip0.1cm
{D\"{o}rfler Strategy.}\quad Given a marking parameter $0<\theta <1$ :
\begin{enumerate}
\item Choose a subset $\mathcal{M}_k \subset \mathcal{T}_k$ such that
\begin{eqnarray}\label{dorfler-strategy}
\sum_{\tau\in \mathcal{M}_k}\eta^2_k(\cdot, \tau)  \geq \theta
\sum_{\tau\in \mathcal{T}_k} \eta^2_k(\cdot,\tau).
\end{eqnarray}
\item Mark all the elements in $\mathcal{M}_k$.
\end{enumerate}

The ``Maximum Strategy" is another wildly  used marking strategy.

\vskip0.1cm
{Maximum  Strategy.}\quad Given a marking parameter $0<\theta <1$ :
\begin{enumerate}
\item Choose a subset $\mathcal{M}_k \subset \mathcal{T}_k$ such that
\begin{eqnarray}\label{maximum-strategy}
 \eta_k(\cdot, \tau)  \geq \theta
\max_{\tau\in \mathcal{T}_k}\eta_k(\cdot,\tau).
\end{eqnarray}
\item Mark all the elements in $\mathcal{M}_k$.
\end{enumerate}


In our computation, we apply the shape-regular bisection for the refinement. We refer to
\cite{chen-dai-gong-he-zhou13,dai-gong-yang-zhou-2011} for more details for the adaptive finite element computations for Kohn-Sham DFT.

\subsection{Kohn-Sham equation}\label{KS-equation}
The Kohn-Sham equation of a molecular system consisting of $M$ nuclei of charges
$\{Z_1,\cdots,Z_M\}$ located at positions $\{{\bf
R}_1,\cdots,{\bf R}_M\}$ and $N_e$ electrons in the non-relativistic
and spin-unpolarized setting is the following nonlinear eigenvalue problem
\begin{eqnarray}\label{eq-ks}
\left\{ \begin{array}{rcl} \left(-\frac{1}{2}\Delta + V_{\rm ext}+ V_H(\rho) +  V_{\rm xc}(\rho)\right)u_i &=& \lambda_i u_i\quad\mbox{in}~\mathbb{R}^3,
\\[1ex] \displaystyle
\int_{\mathbb{R}^3} u_i u_j &=& \delta_{ij}, \quad i, j=1,2,\cdots, N,
\end{array} \right.
\end{eqnarray}
where $\displaystyle \rho(x)=\sum_{i=1}^N| u_i(x)|^2$ is the electron density, $ V_H(\rho) =
\frac{1}{2} \int_{\mathbb{R}^3}
\frac{\rho(y)}{|x-y|}dy $ denotes  the Hartree  potential,
$V_{\rm xc}(\rho)$ indicates the exchange-correlation potential, and $\displaystyle V_{\rm ext}(x)$ is  the  electrostatic potential generated by the nuclei, including both full-potentials  and pseudopotential approximations. For full-potentials, $N = N_e$ and $\displaystyle V_{\rm ext}(x)=-\sum_{k=1}^M\frac{Z_k}{|x-{\bf R}_k|}$.
While for  pseudopotential approximations,  $N$ equals to the number of valence electrons, $V_{\rm ext}  = V_{\rm loc} + V_{\rm nl}$, with $V_{\rm loc}$ being the local part of pseudopotential and  $V_{\rm nl}$ being a
nonlocal part  given
by (see, e.g., \cite{martin04})
$$
V_{\rm nl}\phi=\sum_{j=1}^n(\phi,\zeta_j)\zeta_j
$$
with $ n\in\mathbb{N}$ and $\zeta_j\in L^2(\Omega) (j=1,2,\cdots,n)$.

By density functional theory (DFT)
\cite{hohenberg-kohn64,kohn-sham65}(see, also, \cite{martin04,parr-yang-94}), the ground state (charge) density
of the system may be obtained by solving the
 lowest $N$ eigenpairs of (\ref{eq-ks}).
In computation,  $V_H(\rho)$ is usually obtained by solving the following  Poisson equation:
\begin{align}
-\Delta V_H(\rho)=4\pi\rho(x).
\end{align}
The exact formula for exchange-correlation potential $V_{xc}$ is unknown.
Some approximation (such as  LDA, GGA) has to be used.

Note that the ground state wavefunction of the Schr\"{o}dinger equation and
the solutions of Kohn-Sham model are  exponentially decay \cite{agmon81,almbladh85, yesrentant10},   $\mathbb{R}^3$ is then replaced by some polyhedral  domain $\Omega\subset\mathbb{R}^3$  in computation.
Instead of (\ref{eq-ks}), more precisely, we  solve
\begin{eqnarray}\label{weak-ks}
\left\{ \begin{array}{rcl} a(U; u_i, v)  &=&
\displaystyle \lambda_{i}\big(   u_i, v \big)
\quad\forall~ v\in H_0^1(\Omega), \\[1ex]
\displaystyle \int_{\Omega}u_{i} u_{j} &=& \delta_{ij}, \quad i, j=1,2,\cdots,N,
\end{array} \right.
\end{eqnarray}
where $U = (u_1, u_2,\cdots, u_N)$ and $a(U; \cdot, \cdot): H_0^1(\Omega) \times H_0^1(\Omega) \rightarrow \mathbb{R}$ is defined by
\begin{eqnarray}\label{eq-operator-A}
~~a(U; w, v)  &=& \frac{1}{2} (\nabla w, \nabla v)  + \left((V_{\rm ext}+V_H(\rho)+V_{xc}(\rho_{_U})) w,v\right)
~~\forall~ w,v\in H_0^1(\Omega)
\end{eqnarray} with $\rho_{_U}=\displaystyle\sum^N_{i=1}|u_i|^2.$

\section{Parallel orbital-updating algorithm}\label{sec-algorithm-ks}
In this section, we shall introduce the parallel orbital-updating algorithm for the Kohn-Sham equation.
Obviously, the same idea can be applied to solve Hartree-Fock equations as well as other eigenvalue problems.

 \subsection{Finite dimensional discretization}
Consider a finite dimensional discretization of (\ref{weak-ks}) as
follows:
\begin{eqnarray}\label{fem-dis-ks}
\left\{ \begin{array}{rcl} a(U_n; u_{n, i}, v_n)  &=&
\displaystyle \big(   \lambda_{n, i}u_{n, i}, v_n \big)
\quad\forall~ v_n\in V_n, \\[1ex]
\displaystyle \int_{\Omega}u_{n, i} u_{n, j} &=& \delta_{ij}, \quad i, j=1,2,\cdots,N,
\end{array} \right.
\end{eqnarray}
where $V_n$ is some finite dimensional space. We see that  (\ref{fem-dis-ks}) is a nonlinear eigenvalue problem,
and the so called self-consistent field (SCF)(see, e.g.,
\cite{kresse-furthmuller-96, payne-92}) iteration approaches are
often used to linearize it.

We may divide the finite
dimensional discretizations for Kohn-Sham equations into
three classes: the plane wave method, the local basis set method, and the real
space method. The plane wave method uses plane wave functions as the
basis functions to span a finite dimensional space $V_n$, while the local
basis set method uses some Slater type or Gaussian type functions as
the bases to construct a finite dimensional space $V_n$.
The finite element method is one of commonly used real space methods, where
finite element bases are used to construct  $V_n$. No matter what kind of methods are used to discretize the
Kohn-Sham equation, after linearization, what we get is some
algebraic eigenvalue problem $A u  = \lambda B u$, where $A$ is the
stiff matrix, $B$ is the mass matrix. And we need to solve the first $N$ eigenvalues and the
corresponding eigenfunctions of the algebraic eigenvalue problem.  If $A$ and $B$ are sparse, e.g.,
discretized by a finite element method, then the optimal computational
complexity is $\mathcal{O}(N^2 N_g)$, while if $A$ or $B$ is dense,
e.g., discretized by a plane wave method or a local basis set method, then
the  optimal  computational complexity becomes $\mathcal{O}(N  N_g^2)$.
Here,  $N_g$ is the dimension
of the matrix. We see that  $N_g \gg N$ or a practically complete basis set is difficult to obtain. Note that to obtain an accurate approximation, one
needs to solve tens of such algebraic eigenvalue problems, which limits the application of Kohn-Sham DFT to large
scale systems, especially for full-potential calculations.

\subsection{Parallel orbital-updating approach}\label{ParO}

We have already seen that the huge computational
complexity for solving the discretized eigenvalue problems limits the
application of Kohn-Sham DFT to the electronic structure calculations
for large scale systems. Hence, a faster, accurate and efficient algorithm for solving
Kohn-Sham equations as well as other eigenvalue problems is desired.
In this subsection,
we propose some new approach that can reduce the computational
complexity remarkably, as compared with the existing methods of solving Kohn-Sham equations.

We understand that the Kohn-Sham equation is established within the
framework of single-particle approximation,
 and can be viewed as a set of single-particle equations of independent particles that move in an
 effective potential.
Motivated by the setting that independent particle moves in an efficient potential, we
are indeed able to carry out
 these single-particle orbitals  individually or in parallel intrinsically.
It is shown by our investigation that a simple
iteration with some observational data would be efficient for a
special situation and supercomputers.

We note that for solving a large scale eigenvalue problem, an
iteration scheme is usually used, which can also be view as ( a discretized version of) some
motion. Based on our understanding that the basic rule of motion is
simple but the initial data may be special, we propose the following
parallel orbital-updating algorithm for solving the Kohn-Sham
equation based on finite element discretizations.\vskip 0.2cm

\begin{Algorithm}\label{alg-orbital-updating-ks}

\begin{enumerate}
\item Given initial data  $(\lambda^{(0)}_i, u_i^{(0)})\in \mathbb{R}\times H^1_0(\Omega)$
with $\|u^{(0)}_i\|_{0,\Omega}=1 (i=1,2,\cdots,N)$, define $\mathcal
{T}_0$ and $V_0$, and let $n=0$

\item Construct $\mathcal {T}_{n+1}$ and $V_{n+1}$ based on an adaptive procedure  to $(\lambda^{(n)}_i, u_i^{(n)})$.

\item For $i=1,2,\cdots,N$, find $u^{(n+1/2)}_i\in V_{n+1}$ satisfying
 \begin{eqnarray}\label{alg-bvp}
 a(U^{(n)}; u^{(n+1/2)}_i, v)= \lambda_i^{(n)}(u_i^{(n)},v) ~\forall v\in V_{n+1}
\end{eqnarray}
in parallel.

\item Project to eigenspace: find $(\lambda^{(n+1)},u^{(n+1)})\in \mathbb{R}\times {\tilde V}_{n+1}$ satisfying
$\|u^{(n+1)}\|_{0,\Omega}=1$ and
 \begin{eqnarray}\label{alg-evp}
a(U^{(n+1/2)}; u^{(n+1)},v)=\lambda^{(n+1)}
(u^{(n+1)},v) ~~\forall v\in {\tilde V}_{n+1}
\end{eqnarray}
to obtain eigenpairs
$(\lambda^{(n+1)}_i,u^{(n+1)}_i)(i=1,2,\cdots,N).$

\item Let $n=n+1$ and go to Step 2.
\end{enumerate}
Here  ${\tilde V}_{n+1}=~\mbox{span}~ \{u^{(n+1/2)}_1,u^{(n+1/2)}_2,\cdots,u^{(n+1/2)}_N\}$, $U^{(n)} =
(u_1^{(n)}, u_2^{(n)},\cdots, u_N^{(n)})$, and $U^{(n+1/2)} =
(u_1^{(n+1/2)}, u_2^{(n+1/2)},\cdots, u_N^{(n+1/2)})$.
\end{Algorithm}

In our computation in Section \ref{sec-numerical}, we choose $V_{n}$ as some finite element space
$S_0^{h_n, k}(\Omega)$ over $\mathcal {T}_n$ with some $k$, the degree of piecewise
polynomials. While $\mathcal {T}_n$ is constructed based on some a posteriori error estimations of $(\lambda^{(n)}_i, u_i^{(n)})$, marking strategy and refine procedure as described in Section \ref{afea}. Indeed, $V_n$ can also be any other appropriate relevant finite dimensional
spaces.

We see that to provide a physical observational data is natural and
significant for algorithm's efficiency. In  electronic structure
calculations, we may choose the initial data $(\lambda^{(0)}_i,
u_i^{(0)})$  as follows:
\begin{itemize}
\item Gaussian-type orbital or Slater-type orbital based guesses, which are applicable to full-potential calculations,

\item local plane-wave discretization based guesses, which are applicable to pseudo-potential settings,

\item local finite element/volume discretization based guesses (c.f., e.g.,\cite{chen-he-zhou11,dai-gong-yang-zhou-2011,dai-shen-zhou08,daix-zhou08}), which are applicable to either
full-potential calculations or pseudo-potential settings.
\end{itemize}

It will be demonstrated by our experiments in Section \ref{sec-numerical} that the parallel orbital-updating approach is powerful in
electronic structure calculations. Here we mention  several features of Algorithm
\ref{alg-orbital-updating-ks}  as follows:\vskip 0.2cm

\begin{itemize}

\item {\bf Model linearization}. We understand from Algorithm \ref{alg-orbital-updating-ks} that our parallel orbital-updating
approach is in fact some new SCF iteration technique and mixes
simple discretization iterations of source problems with adaptive
computations.\vskip 0.2cm

\item {\bf Complexity reduction}. We see form our new algorithm that the solution
of the original tens of large scale eigenvalue problems is reduced
to the solution of some independent source problems and some small
scale eigenvalue problems. Since these source problems are
independent each other, our parallel orbital-updating  algorithm can be carried out
in parallel intrinsically. Indeed, the computational complexity then becomes
 $\mathcal{O}(N_g + N_{orb}^3)$, which is much lower than
$\mathcal{O}(N_{orb}^2 N_g)$ or $\mathcal{O}(N_{orb} N_g^2)$, the
costs for  solving the Kohn-Sham equation directly, where $N_{orb}$ is the number of desired
eigenvalues and $N_g$ is the number of unknowns of
the discretized Kohn-Sham equation or the
dimension of the resulting matrix. \vskip 0.2cm

\item {\bf Eigenvalue separation}. If the initial guess is well-posed, then we are able to obtain the orbital approximations individually.
For illustration,  we comment the more general setting Algorithm
\ref{basic-algorithm} rather than  Algorithm \ref{alg-orbital-updating-ks}, which is applied to solve eigenvalue problem (\ref{eigen}).


 For $\lambda\in \sigma(L)$, define
$$
\delta(\lambda)=\inf\{\|w-v\|_{l,\Omega}: w\in M(\lambda), v\in M(\mu), \mu\in \sigma(L)\setminus\{\lambda\}\}.
$$
Let $\mu_i\in\sigma(L)$ whose multiplicity is $q_i(i=1,2,\cdots,M)$, respectively\footnote{In our discussion, $q_1=1$ when $\mu_1=\lambda_1$.}.
Assume that the
initial data $(\lambda^{(0)}_{i+j}, u_{i+j}^{(0)})\in \mathbb{R}\times
H^1_0(\Omega)$
satisfying that  $\|u^{(0)}_{i+j}\|_{0,\Omega}=1 (i=1,2,\cdots,M; j=0,1,2,\cdots,q_i-1)$ and $\mbox{span}\{u_{i+j}^{(0)}:j=0,1,2,\cdots,q_i-1\}$ away from
$$\cup \left\{M(\mu): \mu\in \sigma(L)\setminus\{\mu_i\}\right\},~ i=1,2,\cdots,M.$$ More precisely, for
$i=1,2,\cdots,M$,
$$
\inf\{\|u^{(0)}_{i+j}-v\|_{0,\Omega}: j=0,1,2,\cdots,q_i-1; v\in M(\mu), \mu\in \sigma(L)\setminus\{\mu_i\}\}>\delta(\mu_i)/2.
$$
Then
we may apply  Algorithm \ref{basic-algorithm} to obtain  eigenspace approximations in parallel. We observe that the computational cost $$\mathcal{O}\left(\left(\sum_{i=1}^M q_i\right)^3\right)$$ of computing eigenpairs is then reduced to
$$\mathcal{O}\left(\sum_{i=1}^Mq_i^3\right).$$
\vskip 0.2cm

\item {\bf Two-level parallelization}. We may also see a potential of our algorithm in
large scale parallel computation.  Since these source problems are
independent each other, they can be calculated in parallel
intrinsically, and  each source problems can also be solved in
parallel by various multigrid or domain decomposition approaches,
which is a two level parallelization. One level is the
 solution of these $N$ independent source problems in parallel intrinsically,
another level is to solve each source problem in parallel by using
idea of multigrid or domain decomposition methods. As a result,
there may be some potential for E-scale eigenvalue computations.
\end{itemize}\vskip 0.2cm


We point out that if the initial guess is not well-provided, then we suggest to apply the following algorithm:

\begin{Algorithm}\label{alg-orbital-updating-ks-2}

\begin{enumerate}
\item Given initial data  $(\lambda^{(0)}_i, u_i^{(0)})\in \mathbb{R}\times H^1_0(\Omega)$
with $\|u^{(0)}_i\|_{0,\Omega}=1 (i=1,2,\cdots,N + m)$, define $\mathcal
{T}_0$ and $V_0$, and let $n=0$

\item Construct $\mathcal {T}_{n+1}$ and $V_{n+1}$ based on an adaptive procedure  to $(\lambda^{(n)}_i, u_i^{(n)})$.

\item For $i=1,2,\cdots,N + m$, find $u^{(n+1/2)}_i\in V_{n+1}$ satisfying
 \begin{eqnarray*}
 a(U^{(n)}; u^{(n+1/2)}_i, v)= \lambda_i^{(n)}(u_i^{(n)},v) ~\forall v\in V_{n+1}
\end{eqnarray*}
in parallel.
\item Project to eigenspace: find $(\lambda^{(n+1)},u^{(n+1)})\in \mathbb{R}\times {\tilde V}_{n+1}$ satisfying
$\|u^{(n+1)}\|_{0,\Omega}=1$ and  \begin{eqnarray*}
a(U^{(n+1)}; u^{(n+1)},v)=\lambda^{(n+1)}
(u^{(n+1)},v) ~~\forall v\in {\tilde V}_{n+1}
\end{eqnarray*}
to obtain eigenpairs
$(\lambda^{(n+1)}_i,u^{(n+1)}_i)(i=1,2,\cdots,N + m).$

\item Let $n=n+1$ and go to Step 2.
\end{enumerate}
Here  ${\tilde V}_{n+1}=~\mbox{span}~ \{u^{(n+1/2)}_1,u^{(n+1/2)}_2,\cdots,u^{(n+1/2)}_{N + m}\}$, $U^{(n)} =
(u_1^{(n)}, u_2^{(n)},\cdots, u_N^{(n)})$,  $U^{(n+1/2)} =
(u_1^{(n+1/2)}, u_2^{(n+1/2)},\cdots, u_N^{(n+1/2)})$, and $m$ is some proper integer.
\end{Algorithm}

It tells from Algorithm \ref{alg-orbital-updating-ks-2} that more eigenpairs should be computed so as to get
a better approximation of the first $N$ orbitals. In practice, we see that  Algorithm \ref{alg-orbital-updating-ks-2}
is more stable than  Algorithm \ref{alg-orbital-updating-ks}.

We may refer to Appendix  for a generalization to a class of linear
eigenvalue problems and its basic numerical analysis.

\section{Numerical experiments}\label{sec-numerical}

In this section, we apply our parallel orbital-updating algorithm based on finite element
discretizations to simulate several typical
molecular systems:  $H_2 O$( water), $C_9H_8O_4$(aspirin), $C_5H_9O_2
N$($\alpha$ amino acid), $C_{17}H_{19}N_3$(mirtazapine),
$C_{20}H_{14}N_4$(porphyrin), and $C_{60}$(fullerene), to show the reliability and
efficiency of our approach. Due to the
length limitation of the paper, we only show the results for full potential calculations
 for illustration.


 We understand that Gaussian\cite{gaussian} is  a popular and widely used electronic structure package. In Gaussian09,  many different basis sets with different level of
accuracy are provided for the DFT approach, for example, STO-3G, STO-6G,
6-31G,  cc-pVDZ, cc-pVTZ, cc-pVQZ, cc-pV5Z, cc-pV6Z, and  others.
The better the accuracy, the larger the cpu time cost.

To show the efficiency of our algorithm, we compare our results with
the results obtained by Gaussian09  \cite{gaussian} within the LDA81 DFT setting \cite{perdew-zunger-81}
and using bases
6-31G, cc-pVQZ, cc-pV5Z as well as cc-pV6Z, respectively. In our computation, we use also  LDA81 as the exchange
correlation functional and apply the STO-3G basis to obtain the initial guess
in  Algorithm \ref{alg-orbital-updating-ks}.

Both the results obtained  by our algorithm and those obtained by Gaussian09  are carried out on LSSC-III in
the State Key Laboratory of Scientific and Engineering Computing, Chinese Academy of Sciences, and our package RealSPACES (Real Space Parallel Adaptive Calculation of Electronic Structure)  that are based on the toolbox PHG \cite{phg} of the State Key Laboratory of Scientific and Engineering Computing, Chinese Academy of Sciences. All results are given by atomic units (a.u.).

In our tables, we use some abbreviations:
  \begin{itemize}
\item ParO =Parallel orbital-updating approach \vskip 0.2cm

\item MeshG =Mesh generation  \vskip 0.2cm

\item  SourceS =Source problem solution \vskip 0.2cm


\item DOFs=Degrees of freedom\vskip 0.2cm

\item $N_{procs}$=Number of processors
\end{itemize}\vskip 0.2cm

\subsection{Validation of reliability}

First, we will validate the reliability of our algorithm by using a
small molecular system as example.
As well known, for small atomic or molecular systems, Gaussian09 provides very efficient and accurate results
To illustrate the reliability of our algorithm, we use $H_2 O$ as an example to show that
the  results obtained by our algorithm are reliable by comparing  with those obtained by Gaussian09.

{\bf Example 1:} $H_2O$

The atomic configuration for molecule  $H_2 O$ is figured in Fig.
\ref{cfg-H2O}. Here we compute the first $5$ eigenpairs of the
associated Kohn-Sham equation.

\begin{figure}[htbp]
\centering
\includegraphics[height=3.5cm]{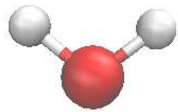}
\caption{Configuration of $H_2 O$($N_{orb}=5$)}\label{cfg-H2O}
\end{figure}

Table \ref{table-H2O} provides the detailed information for results obtained by Gaussian09  with different kinds of basis functions being used and those obtained by our algorithm after different number of iterations. The second column is the total energy obtained, the third column is the number of basis functions used, the forth column is the wall-time cost, while the fifth column is the number of processors used.

It should be pointed out that in our current numerical
experiments, the $N_{orb}$ number of boundary value problems are
solved one by one, not in parallel, and here the multi processors
deal with only the parallelization in space, not in orbital. That is, the $N_{orb}$ boundary value problems $(\ref{alg-bvp})$
are solved one by one, not in parallel. If we have enough processors, the
 $N_{orb}$ boundary value problems can be carried out in parallel intrinsically, the wall-time cost by our
algorithm  would be reduced further, and it is listed in the last column with color red.

\begin{table}[ht]
\centering
\begin{tabular}{|r|r|r|r|r|r|}\hline
       Method     &$E_{tot}$(a.u.)&    DOFs    & Time(s)  &  $N_{procs}$ & \textcolor{red}{Time(s)}\\\hline
       STO-3G     &  $-74.729534 $ &   $ 7 $    & $ 1  $   &     $1 $       &           --            \\\hline
       STO-6G      &  $-75.446932 $ &   $  7 $    & $ 2  $   &     $1 $       &           --            \\\hline
       6-31G    &  $-75.814098 $ &   $  13 $    & $ 2 $  &    $ 1 $      &           --            \\\hline
       cc-pVDZ    &  $-75.850750 $ &   $ 24  $   & $ 3$  &    $ 1$        &           --            \\\hline
       cc-pVTZ    &  $-75.894284 $ &   $ 58  $    & $ 3 $  &    $ 1 $      &           --            \\\hline
       cc-pVQZ    &  $-75.904123 $ &   $ 115 $    & $ 5 $  &    $ 1 $      &           --            \\\hline
       cc-pV5Z    &  $-75.908214 $ &   $ 201 $   & $ 23$  &    $ 1$        &           --            \\\hline
       cc-pV6Z    &  $-75.909024 $ &   $  322 $   & $ 1111$  &    $ 1$        &           --            \\\hline
       \hline
       ParO ($116$)          & {\color{blue} $-75.908257$ }   &  $594027 $ &  {\color{blue} $1096.8 $        }  &       $32$     & \textcolor{red}{$939.5$} \\\hline
       ParO ($149$)          & {\color{blue} $ -75.909026  $ }   &  $1253803 $ &  {\color{blue} $2863.6 $        }  &       $32$     & \textcolor{red}{$2399.4$} \\\hline
       ParO ($166$)          & {\color{blue} $-75.909175  $ }   &  $2007831 $ &  {\color{blue} $4654.6 $        }  &       $32$     & \textcolor{red}{$3845.4$} \\\hline
\end{tabular}
\caption{Results of $H_2O$}\label{table-H2O}
\end{table}

We observe from Table \ref{table-H2O} that for Gaussian09, as the bases changes from STO-3G to cc-pV6Z, the total ground state energy approximation reduces from large to small, while the corresponding cpu-time cost increases fastly, especially when the ground state energy approximation is closed to the exact one. For example, when using bases from cc-pV5Z to cc-pV6Z, the total energy decreases about $0.00081$ a.u., while the cpu-time cost increases 1088s. More precisely, for the basis set method, the cost is not expensive if only less accurate result is required, while the cost will become very huge if the the accuracy of approximation increases a little bit after some critical accuracy.

We see that the ground state energy approximations obtained by our algorithm after $116$ iterations( here, $1$ iteration means doing Step 2, Step 3, and Step 4 of Algorithm  \ref{alg-orbital-updating-ks} once)
is $-75.908257$ a.u., which is very close to that obtained by Gaussian09 using basis cc-pV5Z, and if we refine the mesh adaptively and do one iteration again , then the ground state energy approximation will decrease further. Let us take a more detailed. After $149$ iterations, the energy approximation decreases to $ -75.909026$ a.u., close to the results obtained by Gaussian09 using basis cc-pV6Z. If we do more iterations, that is, after $166$ iterations, the energy approximation decreases to $-75.909175$ a.u., smaller than that obtained by  Gaussian09 using basis  cc-pV6Z by $0.000151$ a.u..

Although the total cpu time cost by our algorithm is much longer than that cost by Gaussian09, we should note that the cpu-time cost does not increase as quickly as that for Gaussian09. In fact, for Gaussian09,
 when the bases are chosen from cc-pV5Z to cc-pV6Z, the total energy approximation decreases from $-75.908214$ a.u.  to $-75.909024$ a.u.,  the cpu-time cost increases from $23$s to  $1111$s, about $50$ times of the former one. For our algorithm, from $116$ iterations to $149$ iterations, the energy approximation  decreases from $-75.908257$ a.u. to $-75.909026$ a.u., the cpu-time cost increases from $1096.8$s to $2863.6$s, only about $3$ times of the former one.

Fig.  \ref{fig-H2O} shows the convergence curve for the ground state energy approximations over each adaptive refined mesh, where the x-axis is the DOFs, and the y-axis is the ground state energy approximation. We see from Fig.  \ref{fig-H2O} that the total energy approximation converges as the
number of degrees increases.  For comparison, we show the  convergence curve of the total energy obtained by
Gaussian09 with different kinds of bases(STO-3G, STO-6G, 6-31G, cc-pVDZ, cc-pVTZ, cc-pVQZ, cc-pV5Z, cc-pV6Z from the right to the left) in Fig. \ref{fig-H2O-g09}.

\begin{figure}[htbp]
\includegraphics[width = 8cm,angle=-90]{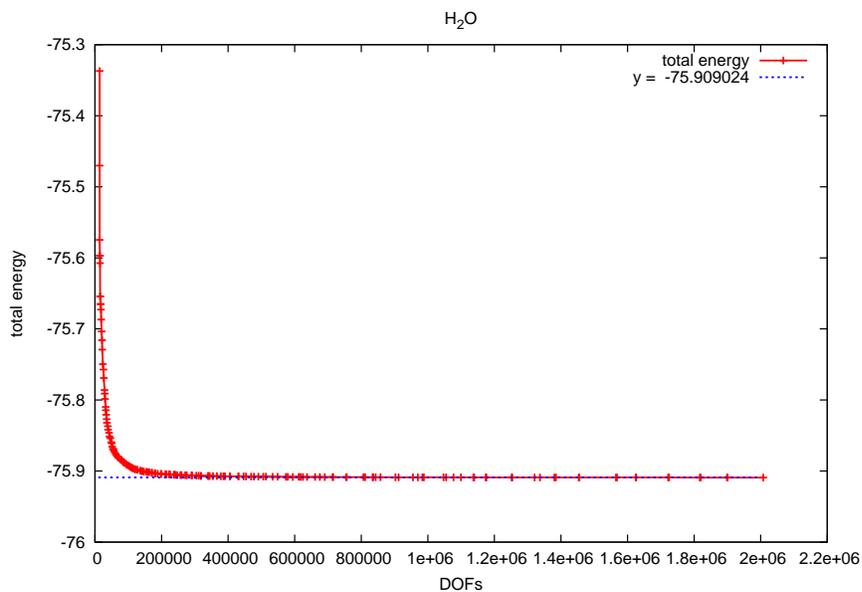} ~~
\caption{Convergence curves of the ground state energy}\label{fig-H2O}
\end{figure}

\begin{figure}[htbp]
\begin{center}
\includegraphics[width = 8cm,angle=-90]{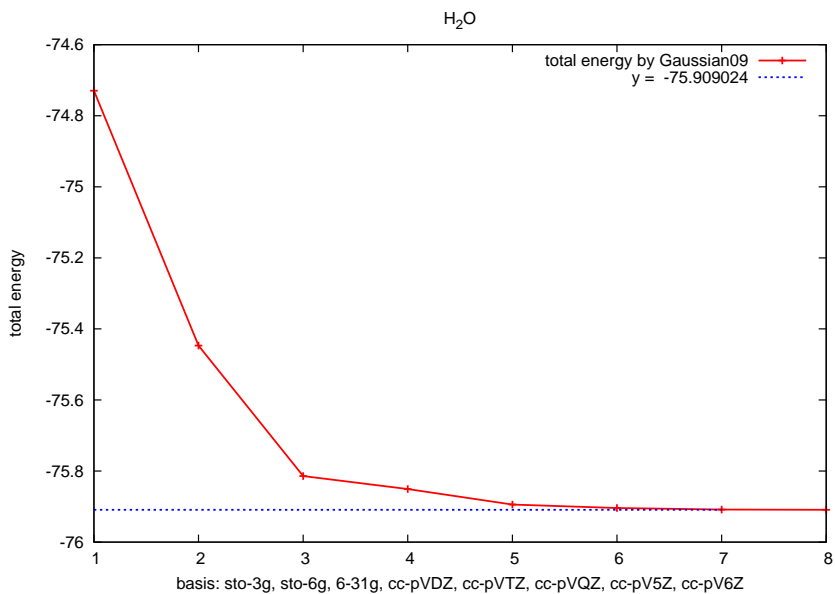} ~~
\end{center}
\caption{Convergence curves of  ground state energy by Gaussian09}\label{fig-H2O-g09}
\end{figure}

We may conclude that our algorithm can produce approximations  as accurate as or even more accurate than Gaussian09. Although for such a small example, the cpu-time cost by our algorithm is much longer than that cost by Gaussian09, we can see the potential of our algorithm when  highly accurate results are desired.

\subsection{Validation of  efficiency}

Although Gaussian09 can provide highly accurate results quickly for
small molecular systems, it is another story for molecular systems of medium or
large size. We see that the memory required for DFT method scales as $N^4$ with $N$ being
the number of basis functions, for instance.

In this subsection, we will use some molecular systems of medium
scale or large scale to show the efficiency of our algorithm.

{\bf Example 2:} $\alpha$-amino acid: $C_5H_9O_2N$

The atomic configuration for molecule  $C_5H_9O_2N$ is shown in Fig.
\ref{cfg-C5H9O2N}. For this example, we compute the first
$31$ eigenpairs of the Kohn-Sham equation, and $32$ processors are used for both Gaussian09 and our code. Table \ref{table-C5H9O2N} displays the relevant results.

\begin{figure}[htbp]
\centering
\includegraphics[height=3.5cm]{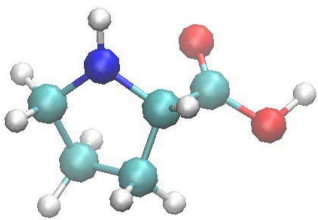}
\caption{Configuration of $C_{5}H_{9}O_2N$($N_{orb}=31$)}\label{cfg-C5H9O2N}
\end{figure}\vskip 0.2cm

\begin{table}[ht]
\centering
\begin{tabular}{|r|r|r|r|r|r|}\hline
       Method     &$E_{tot}$(a.u.)&    DOFs    & Time(s)  &  $N_{procs}$ & \textcolor{red}{Time(s)}\\\hline
       STO-3G     &  $-392.621189 $ &   $ 49  $    & $ 2.9  $   &     $32 $       &           --            \\\hline
       6-31G      &  $-397.673071 $ &   $ 90  $    & $ 3.3  $   &     $32 $       &           --            \\\hline
       cc-pVQZ    &  $-397.998732 $ &   $ 710 $    & $ 386.7 $  &    $ 32 $      &           --            \\\hline
       cc-pV5Z    &  $-398.009882 $ &   $ 1223 $   & $ 2444.6$  &    $ 32$        &           --            \\\hline
       \hline
       ParO ($127$)          & {\color{blue} $-398.009800 $ }   &  $3605766 $ &  {\color{blue} $19771.6$        }  &       $32$     & \textcolor{red}{$8708.8$} \\\hline
       ParO ($136$)          & {\color{blue} $-398.010611$ }   &  $4634263$ &  {\color{blue} $28868.7 $        }  &       $32$     & \textcolor{red}{$12101.5$} \\\hline
        MeshG      &  $-392.630620$    & $ 104037 $   & $4.5$
  &            &           $4.5$      \\ \hline
        SourceS    &  $-398.010611$          &  $4634263$    &  $17131.7$      &    $32$    & \textcolor{red}{ 364.5}   \\
\hline
\end{tabular}
\caption{Results of $C_{5}H_{9}O_2N$}\label{table-C5H9O2N}
\end{table}

We see from Table \ref{table-C5H9O2N}  that the total energy approximation obtained by Gaussian09 decreases when the bases are chosen from STO-3G to cc-pV5Z, while the cpu-time cost increases quickly. We mention that due to the huge storage requirement, we are not able to produce the result by Gassian09 using bases cc-pV6Z.

We also observe from Table \ref{table-C5H9O2N} that after $127$ iterations, the total ground state energy approximations obtained by our algorithm is $-398.009800$a.u., which is close to that obtained by cc-pV5Z. If we refine the mesh adaptively again, we can obtain more accurate results. For example, after $136$ iterations, the total energy approximation decreases to $-398.010611$a.u., which is  $0.000729$a.u. smaller than that obtained by  Gassian09 using bases cc-pV5Z. The cpu-time cost after $127$ iterations and $136$ iterations are about $19771.6$s and $28868.7$s respectively. As we addressed in  {\bf Example 1}, in our current computations, the $N_{orb}$ number of boundary value problems are solved one by one, not in parallel, and here the $32$ processors
deal with only the parallelization in space, not in orbital. Since
the $31$ boundary value problems can be carried out in parallel intrinsically,  the wall-time cost by our
algorithm cost would be at least reduced to $8708.8$s and $12101.5$s respectively provided we have
$992(32 \times 31)$ processors.

  We note from Fig. \ref{fig-C5H9O2N-energy}  that the total energy approximation converges as the
iteration increases.

\begin{figure}[htbp]
\begin{center}
\includegraphics[width = 8cm,angle=-90]{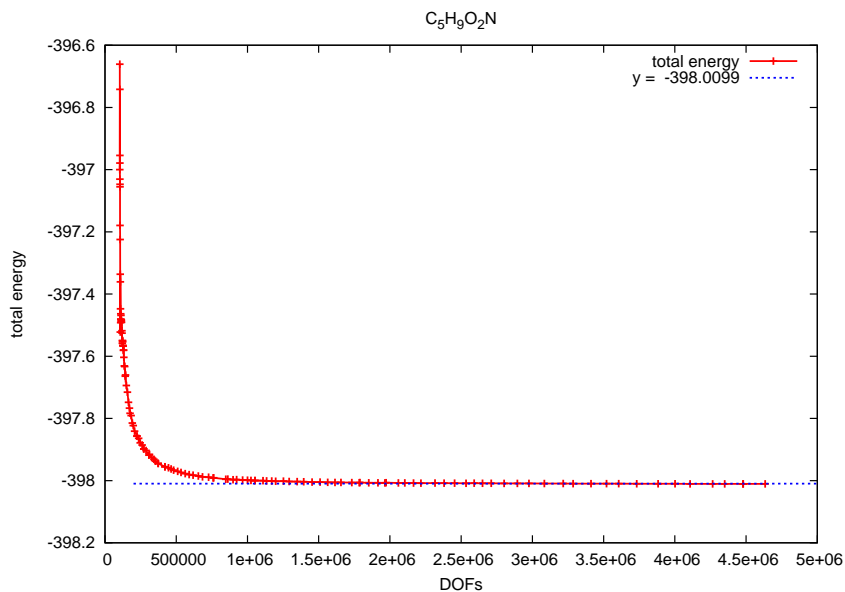} ~~
\end{center}
\caption{Convergence curves of the ground state energy}\label{fig-C5H9O2N-energy}
\end{figure}\vskip 0.2cm

{\bf Example 3:}  $C_9H_8O_4$

The atomic configuration for molecule  $C_9H_8O_4$ is shown in Fig.
\ref{cfg-C9H8O4}. We solve  the first $47$
eigenpairs of the Kohn-Sham equation.

\begin{figure}[htbp]
\centering
\includegraphics[height=3.5cm]{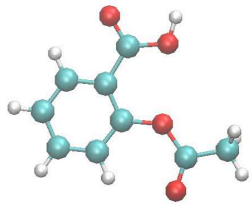}\vskip -0.4cm
\caption{Configuration of $C_9H_8O_4$($N_{orb}=47$)}\label{cfg-C9H8O4}
\end{figure}

Table
\ref{table-C9H8O4} digitizes some relevant results, including those obtained by Gaussian09 and those obtained by our algorithm over the mesh got by $79$ adaptive refinements.
Fig. \ref{fig-C9H8O4-energy}  displays the convergence curve of the ground
state energy approximation over each adaptive refined mesh.

\begin{table}[ht]
\centering \vskip -0.3cm
\begin{tabular}{|r|r|r|r|r|r|}\hline
       Method     &$E_{tot}$(a.u.)&    DOFs    & Time(s)  &  $N_{procs}$ & \textcolor{red}{Time(s)}\\\hline
       STO-3G     &  $-634.903702$  &    $73$      &  $4.2$     &     $32$       &           --            \\\hline
       6-31G      &  $-643.159782$  &    $133$     &  $10.7$    &     $32$       &           --            \\\hline
       cc-pVQZ    &  $-643.656111$  &    $955$     &  $895.8$   &    $ 32$       &           --            \\\hline
       cc-pV5Z    &  $-643.673930$  &   $1623$    &  $3782.8$  &     $32$       &           --            \\\hline
       \hline
       ParO ($91$)          & {\color{blue} $-643.648898 $ }   &  $10983675$ &  {\color{blue} $41390.8$        }  &       $32$     & \textcolor{red}{$23229.7$} \\
       \hline
        MeshG      &  $-635.473477$    & $ 609169  $   & $48.1$
  &            &           $48.1$      \\
        SourceS    &  $-643.648898$          &  $10983675$    &  $18555.9$      &    $32$    & \textcolor{red}{394.8}   \\
\hline
\end{tabular}
\caption{Results of  $C_9H_8O_4$}\label{table-C9H8O4}
\end{table}

  \begin{figure}[htbp]
\begin{center}
\includegraphics[width = 8cm,angle=-90]{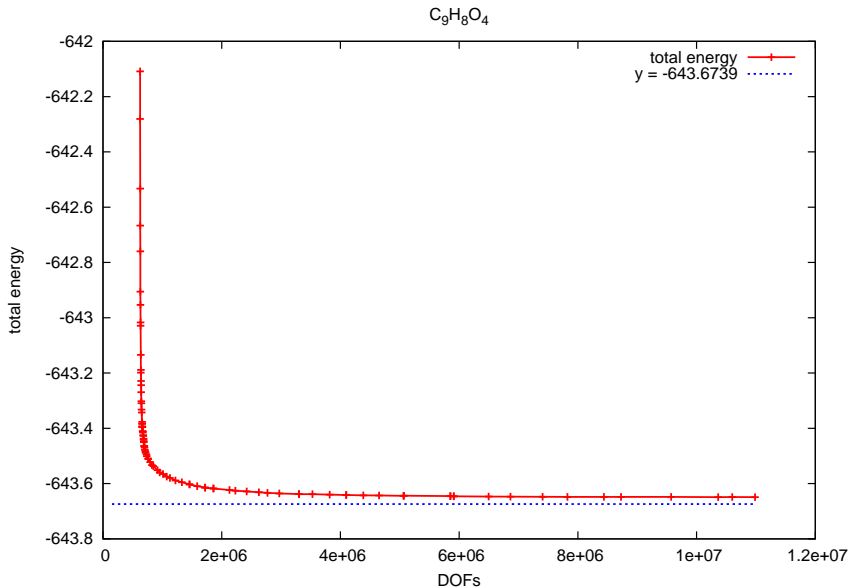} ~~
\end{center}
\caption{Convergence curves of the ground state energy}\label{fig-C9H8O4-energy}
\end{figure}\vskip 0.2cm

For this example, the total energy obtained by our algorithm are a little larger than that obtained by Gaussian09 with cc-pV5Z being used.

\subsection{Full-potential calculation for large scale system}
In this subsection, we apply our algorithm to full-potential
electronic structure calculations for some large molecular systems.

{\bf Example 4}: $C_{17}H_{19}N_3$

The  configuration for molecule  $C_{17}H_{19}N_3$ is shown in Fig.
\ref{cfg-C17H19N3} and the first
$71$ eigenpairs of the Kohn-Sham equation are computed, and $32$ processors are used for both Gaussian09 and our code.
Table
\ref{table-C17H19N3} provides the relevant results.

\begin{figure}[htbp]
\centering
\includegraphics[height=3.5cm]{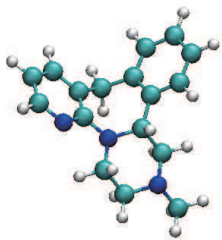}
\caption{Configuration of $C_{17}H_{19}N_3$($N_{orb}=71$)}\label{cfg-C17H19N3}
\end{figure}

\begin{table}[ht]
\centering  \vskip -0.3cm
\begin{tabular}{|r|r|r|r|r|r|}\hline
       Method     &$E_{tot}$(a.u.)&    DOFs    & Time(s)    &  $N_{procs}$  & \textcolor{red}{Time(s)} \\\hline
       STO-3G     &  $-805.972579$  &    $119$     & $4.3$        &       $32$     &            --            \\\hline
       6-31G      &  $-815.869313$  &    $218$     & $19.9$       &       $32$      &            --            \\\hline
       cc-pVQZ    &  $-816.445414$  &    $1670$    & $3456.6$     &       $32$      &            --            \\\hline
       cc-pV5Z    &  $-816.467053$  &    $2865$    & $20675.6$    &       $32$      &            --            \\\hline
        ParO ($73$)          & {\color{blue} $ -816.468992 $ }   &  $6007146$ &  {\color{blue} $ 33416.7$        }  &       $32$     & \textcolor{red}{$13885.0$} \\\hline
       ParO ($91$)          & {\color{blue} $-816.493475  $ }   &  $9963665 $ &  {\color{blue} $86465.1$        }  &       $32$     & \textcolor{red}{$31137.4$} \\\hline
       \hline
        MeshG      &  $-805.778326$    & $ 264415  $   & $31.2$
  &            &           $31.2$      \\
        SourceS    &  $-816.493475$          &  $9963665$    &  $56118.1 $      &    $32$    & \textcolor{red}{790.4}   \\
\hline
\end{tabular}
\caption{Results of $C_{17}H_{19}N_3$}\label{table-C17H19N3}
\end{table}\vskip 0.2cm

We observe from Table \ref{table-C17H19N3} that after $73$ iterations,  the total energy
 obtained by our algorithm is very close to that obtained by Gaussian09 with using bases cc-pV5Z. Let us  take a look at the cpu-time cost. We see that when using the same number of processors, the cpu-time cost by Gaussian09 and our algorithm are $20675.6$s and $33416.7$s, respectively. Note that the $N_{orb}$ boundary value problems are solved one by one, not in parallel. If we apply $2272(=32\times 71)$ processors, indeed, the cpu-time cost by our algorithm can then be reduced to  $13885.0$s. If we refine the mesh again, the energy approximation will reduce further. For instance, after $91$ iterations,
 the total energy approximation will reduce to $-816.493475$, which is $0.026422$ a.u. smaller than that obtained by Gaussian09 with cc-pV5Z being used.

Fig. \ref{fig-C17H19N3-energy}  shows the convergence curve of the ground
state energy approximation, from which we conclude that the approximation results obtained by our algorithm  converge.

  \begin{figure}[htbp]
\begin{center}
\includegraphics[width = 8cm,angle=-90]{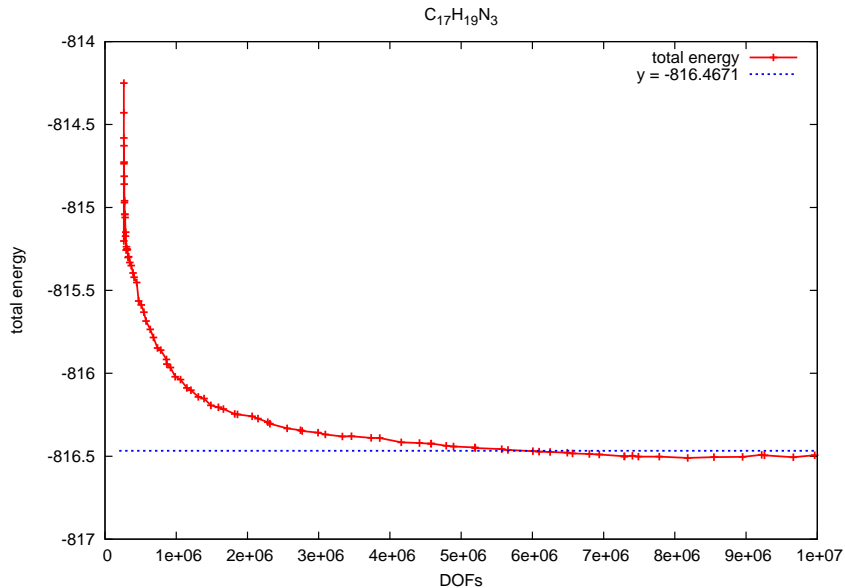} ~~
\end{center}
\caption{Convergence curves of the ground state energy}\label{fig-C17H19N3-energy}
\end{figure}\vskip 0.2cm

{\bf Example 5}: $C_{20}H_{14}N_4$

The atomic configuration for molecule  $C_{20}H_{14}N_4$ is shown in
Fig. \ref{cfg-C20H14N4} and the
first $81$ eigenpairs of the Kohn-Sham equation are approximated.

\begin{figure}[htbp]
\centering
\includegraphics[height=3.5cm]{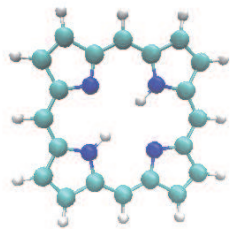}
\caption{Configuration of $C_{20}H_{14}N_4$($N_{orb}=81$)}\label{cfg-C20H14N4}
\end{figure}

Table
\ref{table-C20H14N4} digitizes the computational results, including those obtained by Gaussian09 and those obtained by our algorithm after $58$ and $68$
iterations.
Fig. \ref{fig-C20H14N4-energy}  shows the convergence curve of the ground
state energy.
¡¡
We get the similar conclusions as that obtained of {\bf Example 4} from Table \ref{table-C20H14N4}  and Fig.\ref{fig-C20H14N4-energy}.

\begin{table}[ht]
\centering \vskip -0.3cm
\begin{tabular}{|r|r|r|r|r|r|}\hline
       Method         &$E_{tot}$(a.u.)&    DOFs    & Time(s)    &  $N_{procs}$ & \textcolor{red}{Time(s)}  \\\hline
       STO-3G         &  $-968.459221$  &    $134$     &   $3.9$      &       $32 $    &              --           \\\hline
       6-31G          &  $-980.529009$  &   $ 244$     &   $12.4$     &       $32$     &              --           \\\hline
       cc-pVQZ        &  $-981.230600$  &    $1710$    &   $1798.7$   &       $32$     &              --           \\\hline
       cc-pV5Z        &  $-981.257126$  &    $2954$    &   $10499.2$  &       $32$     &              --           \\\hline
 ParO ($58$)          & {\color{blue} $ -981.257001 $ }   &  $5861789  $ &  {\color{blue} $30109.6$        }  &       $32$     & \textcolor{red}{$11738.6$} \\\hline
 ParO ($68$)          & {\color{blue} $ -981.270015  $ }   &  $10219345 $ &  {\color{blue} $77275.6$        }  &       $32$     & \textcolor{red}{$25223.8$} \\\hline
        MeshG      &  $-965.172827$    & $596951  $   & $40.9$
  &            &           $40.9$      \\
        SourceS    &  $-981.270015$          &  $10219345 $    &  $52702.5 $      &    $32$    & \textcolor{red}{650.6}   \\
\hline
\end{tabular}
\caption{Results  of $C_{20}H_{14}N_4$}\label{table-C20H14N4}
\end{table}\vskip 0.2cm

%
%

  \begin{figure}[htbp]
\begin{center}
\includegraphics[width = 8cm,angle=-90]{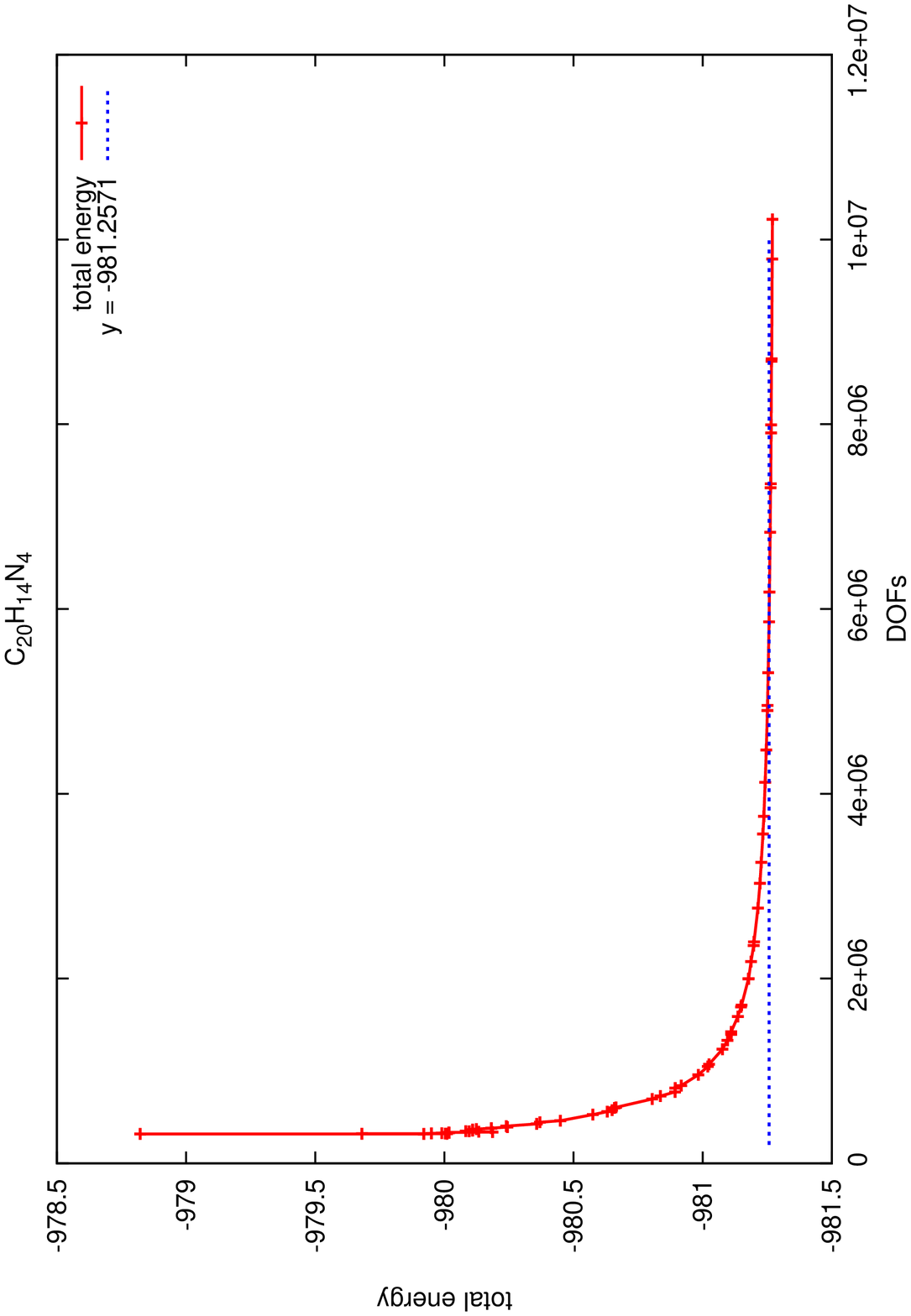} ~~
\end{center}
\caption{Convergence curves of the ground state energy}\label{fig-C20H14N4-energy}
\end{figure}
\vskip 0.2cm

{\bf Example 6}: $C_{60}$

The atomic configuration for molecule  $C_{60}$ is shown in Fig.
\ref{cfg-C60} and we compute the first $180$
eigenpairs of the Kohn-Sham equation.

\begin{figure}[htbp]
\centering
\includegraphics[height=3.5cm]{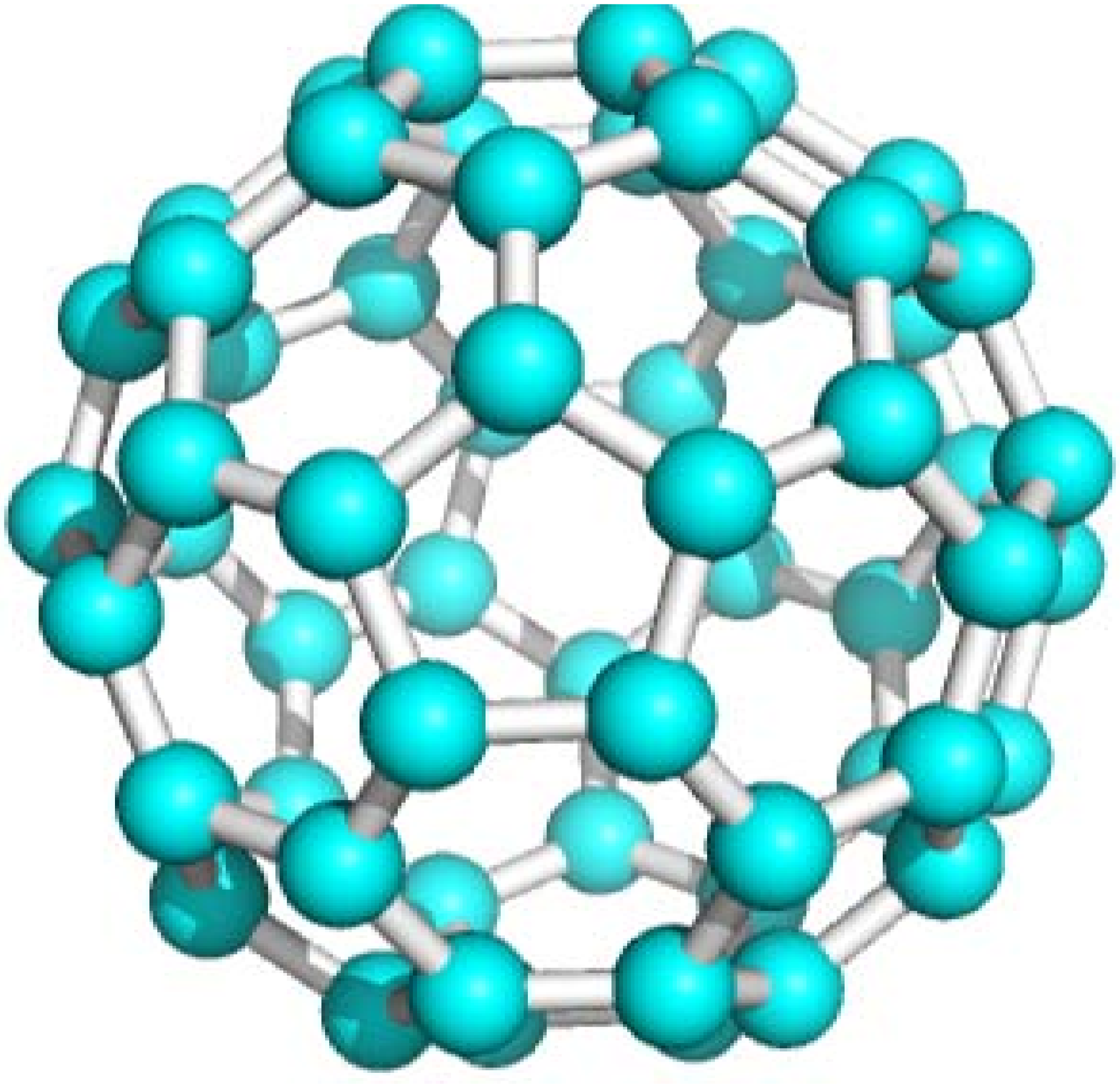}
\caption{Configuration of $C_{60}$($N_{orb}=180$)}\label{cfg-C60}
\end{figure}

Table \ref{table-C60} digitizes the relevant data. ¡¡
Fig. \ref{fig-C60-energy} provides the convergence curves of the total energy after each
iteration.

We conclude from Table \ref{table-C60} and Fig. \ref{fig-C60-energy} that  the similar conclusions as that of {\bf Example 4} can be obtained.

\begin{table}[htbp]
\centering \vskip -0.3cm
\begin{tabular}{|r|r|r|r|r|r|}\hline
       Method         &$E_{tot}$(a.u.) &    DOFs   & Time(s)     &  $N_{procs}$ & \textcolor{red}{Time(s)}   \\\hline
       STO-3G         &  $-2238.127540$  &    $300$    &   $80.5$      &      $64$      &              --            \\\hline
       6-31G          &  $-2265.319422$  &    $540$    &   $102.1$     &      $64$      &              --            \\\hline
       cc-pVQZ        &  $-2266.766797$  &   $3300$    &   $6250.3$    &      $64$      &              --            \\\hline
  cc-pV5Z        &  $-2266.824090$  &   $5460$    &   $53034.7$    &      $64$      &              --            \\ \hline \hline
  ParO ($88$)          & {\color{blue} $-2266.820160$ }   &  $10742296  $ &  {\color{blue} $88894.4$        }  &       $64$     & \textcolor{red}{$38533.2$} \\\hline
       ParO ($95$)          & {\color{blue} $-2266.837443 $ }   &  $13425339$ &  {\color{blue} $137958.4$        }  &       $64$     & \textcolor{red}{$55662.4$} \\\hline
        MeshG      &  $-2239.762286$    & $596951  $   & $174.5$
  &            &           $174.5$      \\
        SourceS    &  $-2266.837443  $          &  $13425339 $    &  $51188.3$      &    $64$    & \textcolor{red}{410.3}   \\
\hline
\end{tabular}
\caption{Results of $C_{60}$}\label{table-C60}
\end{table}\vskip 0.2cm

  \begin{figure}[htbp]
\begin{center}
\includegraphics[width = 8cm,angle=-90]{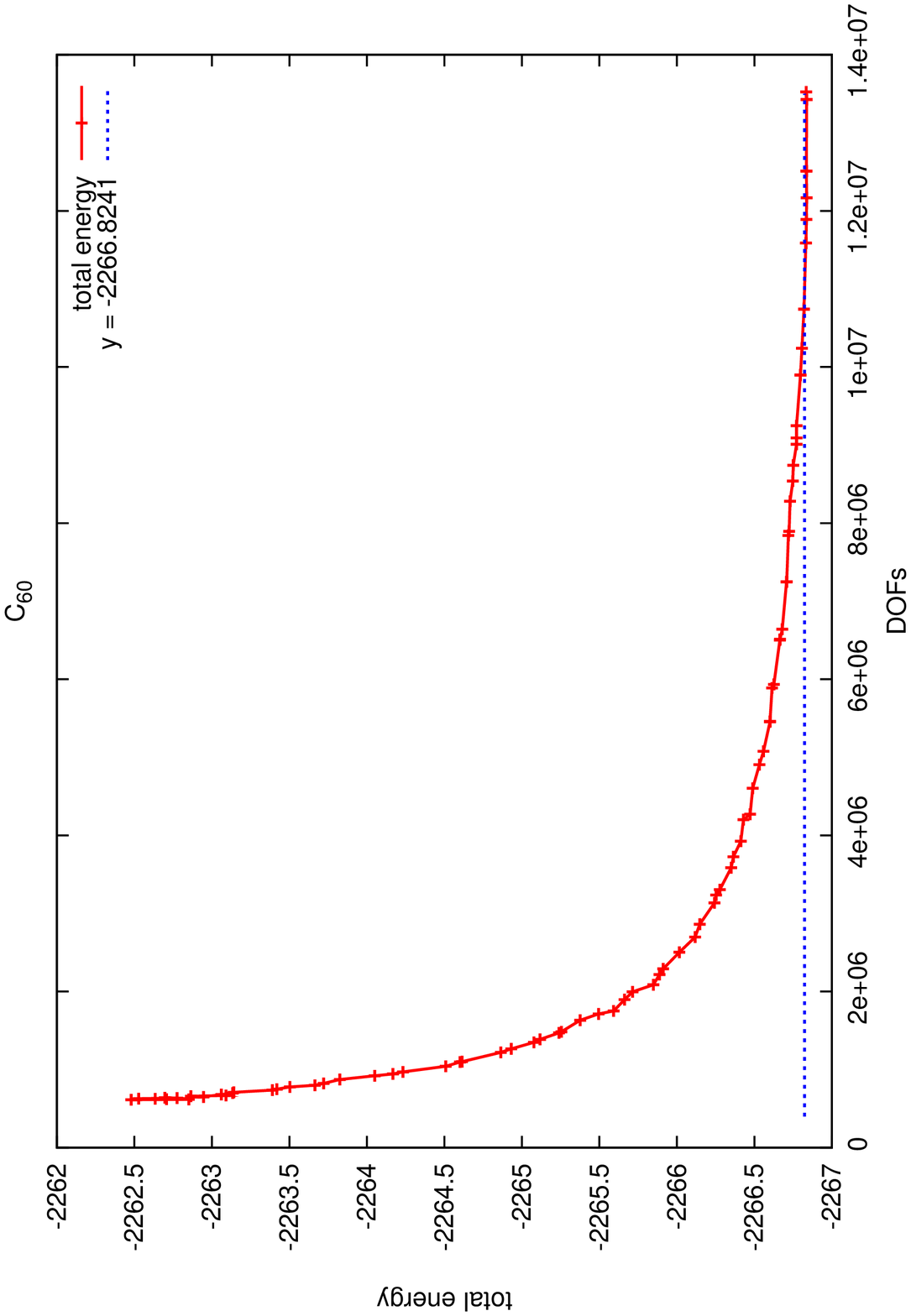} ~~
\end{center}
\caption{Convergence curves of the ground state energy}\label{fig-C60-energy}
\end{figure}\vskip 0.2cm

¡¡

\section{Concluding remarks}
In this paper, we proposed a parallel orbital-updating algorithm for
electronic structure calculations, which are demonstrated to be accurate and efficient for full-potential calculations
based on finite element discretizations.

From the comparison between results obtained by our algorithm and
those obtained by Gaussian 09 with different kinds of basis sets, we may
conclude that
\begin{itemize}
\item our algorithm can produce highly accurate approximations
to the exact one for medium or large scale systems;

\item  the cpu time cost by our algorithm is much
 lower than that cost by Gaussian09, especially for large scale systems;

\item our algorithm are efficient for full-potential calculations for  large scale systems;

\item our algorithm is
sequential parallel, which may mean that it has a potential to supercomputing.
\end{itemize}

Although  we have a primitive analysis as Theorem \ref{thm-estimate-h1}, we are currently not able to present a
mathematically rigorous
proof how  the approximations converge to the exact ground state energy and density.
Note that only  simple and basic  source problems have been employed in our
algorithms. It is our on-going work to study and apply more efficient source models into
our parallel orbital-updating approach
to speed up the approximation convergence, which will be addressed elsewhere.
Anyway, we believe that our approach is a general and powerful parallel-computing technique
that can be applied to a variety of eigenvalue problems, including partial differential equation based
ones with differential types of discretization methods, and other nonlinear problems.

\section*{Acknowledgements}  The authors would
like to thank Doctor Hongping Li his helps in using Gaussian09.

\section*{Appendix: Parallel orbital-updating algorithm for linear eigenvalue problems} \label{sec-algorithm-ep}
\setcounter{equation}{0}
\renewcommand{\theequation}{A.\arabic{equation}}
\renewcommand{\thetheorem}{A.\arabic{theorem}}
\renewcommand{\thelemma}{A.\arabic{lemma}}
\renewcommand{\theproposition}{A.\arabic{proposition}}
\renewcommand{\thesection}{A}
In this appendix, we show that the  parallel orbital-updating
approach can be applied to a general eigenvalue computation.

Consider eigenvalue problem:
\begin{equation}\Label{problem}
\left\{\begin{array}{rl}
L u &= \lambda u  \,\,\, \mbox{in} \quad \Omega, \\[0.2cm]
u &= 0\,\,\,\mbox{on}~~\partial\Omega, \end{array}\right.
\end{equation}
where  $L$ is a linear second order elliptic operator:
\[ Lu =-\nabla \cdot(A\nabla u)+cu\] with $A: \Omega\to \mathbb{R}^{d\times d}$ being piecewise Lipschitz
 with respect to the initial triangulation  and symmetric positive definite with smallest eigenvalue uniformly bounded away from 0, and $0\le c\in L^{\infty}(\Omega)$ \footnote{We mention that the results obtained in this paper are also valid for a more general bilinear form $a(\cdot,\cdot)$ (c.f., e.g., Remark 2.9 in \cite{dai-xu-zhou08}).}\label{footnote1}.

Define
\begin{eqnarray}
a(u, v) = \int_{\Omega}  A\nabla u \nabla v + c u v, ~\forall u, v
\in H^1(\Omega).
\end{eqnarray}


The weak form of (\ref{problem}) can be written as follows: find a pair  $(\lambda, u)\in
\mathbb{R}\times H^1_0(\Omega)$ satisfying $\|u\|_{0,\Omega}=1$ and
\begin{eqnarray}\label{eigen} a(u,v)=\lambda
(u,v)~~~~\forall v\in H^1_0(\Omega).
\end{eqnarray}

Equation (\ref{eigen}) has a  sequence of real eigenvalues $\sigma(L)\equiv\{\lambda_i: i=1,2,\cdots\}$:
$$
0 < \lambda_1\ <\lambda_2\le\lambda_3\le\cdots
$$
and the corresponding eigenfunctions
$
u_1, u_2, u_3,\cdots,
$
satisfying
$$
(u_i, u_j)=\delta_{ij}, ~i,j=1,2,\cdots,
$$
where the $\lambda_j$'s are repeated according to geometric
multiplicity.

Let $\lambda\in \sigma(L)$ and  $M(\lambda)$ denote the space of eigenfunctions corresponding to $\lambda$:
\begin{eqnarray*}
M(\lambda)=\{w\in H^1_0(\Omega): w ~\mbox{is an eigenvector of (\ref{eigen}) corresponding to } \lambda\}.
\end{eqnarray*}

A standard finite element  approximation for (\ref{eigen}) is: find a pair
$(\lambda_h, u_h)\in \mathbb{R}\times V_h$ satisfying
$\|u_h\|_{0,\Omega}=1$ and
\begin{eqnarray}\label{fem-eigen} a(u_h,v)=\lambda_h
(u_h,v)~~~~\forall v\in V_h.
\end{eqnarray}

We may order the eigenvalues of  Let $(\lambda_{i,h},u_{1,h})(i=1,2,\cdots,n_h\equiv \mbox{dim} V_h)$ be the
 eigenpairs of (\ref{fem-eigen})
satisfying
$$
b(u_{i,h}, u_{j,h})=\delta_{ij}, ~i,j=1,2,\cdots,n_h .
$$

We propose a parallel orbital-updating algorithm for solving
(\ref{fem-eigen}) as follows:

\begin{Algorithm}\label{basic-algorithm}
\begin{enumerate}
\item Given initial data $(\lambda^{(0)}_i, u_i^{(0)})\in \mathbb{R}\times H^1_0(\Omega)$
with $\|u^{(0)}_i\|_{0,\Omega}=1 (i=1,2,\cdots,N)$ and $\mathcal
{T}_0$, and let $n=0$.

\item  Construct $\mathcal {T}_{n+1}$ and $V_{n+1}$ based on an adaptive procedure  to $(\lambda^{(n)}_i, u_i^{(n)})$.

\item For $i=1,2,\cdots,N$, find $u^{(n+1/2)}_i\in V_{n+1}$ satisfying
\begin{eqnarray*} a(u^{(n+1/2)}_i, v)=
\lambda_i^{(n)}(u_i^{(n)},v) ~\forall v\in V_{n+1}
\end{eqnarray*}
in parallel.

\item Project to eigenspace: find $(\lambda^{(n+1)},u^{(n+1)})\in \mathbb{R}\times {\tilde V}_{n+1}$ satisfying
$\|u^{(n+1)}\|_{0,\Omega}=1$ and \begin{eqnarray*}
a(u^{(n+1)},v)=\lambda^{(n+1)} (u^{(n+1)},v) ~~\forall v\in {\tilde
V}_{n+1}
\end{eqnarray*}
to obtain eigenpairs
$(\lambda^{(n+1)}_i,u^{(n+1)}_i)(i=1,2,\cdots,N)$.

\item Let $n=n+1$ and go to Step 2.
\end{enumerate}
Here ${\tilde V}_{n+1}=~\mbox{span}~ \{u^{(n+1/2)}_1,u^{(n+1/2)}_2,\cdots,u^{(n+1/2)}_N\}$.
\end{Algorithm}\vskip 0.2cm

To analyze Algorithm \ref{basic-algorithm}, we introduce a
Galerkin-projection $P_h: H^1_0(\Omega) \to V_h\equiv S^h_0(\Omega)$
by
\begin{eqnarray}\label{Gprojection}
a(u-P_h u, v) =0 \quad\forall u\in H^1_0(\Omega) ~\forall  v\in V_h,
\end{eqnarray}
and apparently
\begin{equation*}\label{stable}
      \|P_hu\|_{1,\Omega}\lc \|u\|_{1,\Omega} \quad\forall u\in H^1_0(\Omega)
\end{equation*}
and
\begin{equation*}
      \|u-P_hu\|_{1,\Omega}\lc \inf_{v\in V_h}\|u-v\|_{1,\Omega} \quad\forall u\in
      H^1_0(\Omega).
\end{equation*}

\begin{theorem}\label{thm-estimate-h1}
Let $(\lambda_i^{(1)}, u_i^{(1)})$($i=1, \cdots, N$) be obtained by
Algorithm \ref{basic-algorithm} after one iteration,
 $(\lambda_i, u_i)$($i=1, \cdots, N$) be the first $N$ exact eigenpair of (\ref{eigen}). Then
\begin{eqnarray}\label{conc-estimate-h1}
d_{1}(u_i^{(1)}, V)  \lesssim \sum_{k=1}^{N}\Big(|\lambda_k- \lambda_k^{(0)}|+\inf_{v\in V_1}
\|u_k-v\|_1+\|u_k-u_k^{(0)}\|_0\Big),
\end{eqnarray}
where $V = span\{u_1, \cdots, u_N\}$, and the distance between $w$ and $W \in H_0^1(\Omega)$ is defined by
\begin{eqnarray}
 d_{1}(w,  W) = \inf_{v \in W} \| w - v\|_{1, \Omega}.
\end{eqnarray}
\end{theorem}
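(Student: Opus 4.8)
The plan is to track a single orbital $u_i^{(1)}$ through the two substeps of one iteration of Algorithm~\ref{basic-algorithm} and to estimate, at each stage, the distance to the invariant subspace $V = \operatorname{span}\{u_1,\dots,u_N\}$. First I would analyze Step~3. The function $u_i^{(n+1/2)}$ solves a Galerkin source problem whose right-hand side is $\lambda_i^{(0)}(u_i^{(0)},\cdot)$. The natural reference object is the exact solution $w_i \in H_0^1(\Omega)$ of $a(w_i,v) = \lambda_i^{(0)}(u_i^{(0)},v)$ for all $v \in H^1_0(\Omega)$; then $u_i^{(1/2)} = P_1 w_i$ with $P_1$ the Galerkin projection onto $V_1$ from \eqref{Gprojection}. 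I would decompose
\[
w_i - u_i = L^{-1}\bigl(\lambda_i^{(0)} u_i^{(0)}\bigr) - \lambda_i^{-1} u_i
         = L^{-1}\bigl(\lambda_i^{(0)} u_i^{(0)} - \lambda_i u_i\bigr),
\]
using $u_i = \lambda_i L^{-1} u_i$, and then write $\lambda_i^{(0)} u_i^{(0)} - \lambda_i u_i = (\lambda_i^{(0)}-\lambda_i)u_i^{(0)} + \lambda_i(u_i^{(0)}-u_i)$. Since $L^{-1}: L^2 \to H^1_0$ is bounded, this gives $\|w_i - u_i\|_1 \lesssim |\lambda_i - \lambda_i^{(0)}| + \|u_i - u_i^{(0)}\|_0$. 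Combining with $\|w_i - P_1 w_i\|_1 \lesssim \inf_{v\in V_1}\|w_i - v\|_1 \le \inf_{v\in V_1}\|u_i - v\|_1 + \|w_i - u_i\|_1$, one obtains a bound on $\|u_i^{(1/2)} - u_i\|_1$, and hence on $d_1(u_i^{(1/2)}, V)$, by the right-hand side of \eqref{conc-estimate-h1} (with only the $k=i$ term, at this stage).

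Next I would handle Step~4, the Rayleigh--Ritz projection onto $\tilde V_1 = \operatorname{span}\{u_1^{(1/2)},\dots,u_N^{(1/2)}\}$. The key structural observation is that $d_1(u_i^{(1)}, V)$ is controlled by how far the whole subspace $\tilde V_1$ lies from $V$: if every unit vector in $\tilde V_1$ is within $\epsilon$ of $V$ in the $H^1$ (equivalently $a(\cdot,\cdot)$-)norm, then since $u_i^{(1)} \in \tilde V_1$ and $\|u_i^{(1)}\|_0 = 1$, a norm-equivalence argument (using $c \ge 0$ and coercivity of $a$, with the $\lambda_i$ bounded) gives $d_1(u_i^{(1)}, V) \lesssim \epsilon$. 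So it suffices to bound $\sup\{d_1(\phi, V): \phi \in \tilde V_1,\ \|\phi\|_{1}=1\}$ by the sum $\sum_{k=1}^N(\cdots)$. This in turn follows because each generator $u_k^{(1/2)}$ of $\tilde V_1$ satisfies the Step~3 bound derived above, so an arbitrary normalized combination $\phi = \sum_k c_k u_k^{(1/2)}$ is within $\sum_k |c_k|\,\|u_k^{(1/2)} - P_V u_k^{(1/2)}\|_1$ of $V$ (where $P_V$ is the $L^2$-orthogonal projection onto $V$, which is bounded in $H^1$ on the eigenfunction span); controlling $\sum_k |c_k|$ by a constant times $\|\phi\|_0^{-1}\cdot\|\phi\|_0$ via near-orthogonality of the $u_k^{(1/2)}$ (which holds once the perturbation is small, or may be imposed as the "well-posed initial guess" hypothesis alluded to in the Eigenvalue separation remark) closes the estimate.

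The main obstacle is the last point: passing from "each generator $u_k^{(1/2)}$ is close to $V$" to "every vector of the span $\tilde V_1$ is close to $V$" requires a uniform lower bound on the smallest singular value of the Gram-type map $(c_1,\dots,c_N) \mapsto \|\sum_k c_k u_k^{(1/2)}\|$, i.e. a non-degeneracy of the family $\{u_k^{(1/2)}\}$. Heuristically this is fine because the $u_k^{(1/2)}$ are small perturbations of the orthonormal $u_k$'s, but making the constant in \eqref{conc-estimate-h1} genuinely independent of the data demands either a smallness assumption on the right-hand side or an argument that the Gram matrix stays uniformly invertible; this is exactly the gap the authors flag in their Concluding remarks (that a fully rigorous convergence proof is not yet available), so I would expect the theorem as stated to carry an implicit "for the initial guess sufficiently close" proviso, or to absorb such a factor into the $\lesssim$. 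The remaining ingredients---boundedness of $L^{-1}$, of $P_1$, of $P_V$ on the eigenspace, norm equivalence on finite-dimensional spaces---are all routine and I would not belabor them.
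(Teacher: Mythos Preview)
Your proposal follows essentially the same two-step strategy as the paper's proof: bound $\|u_k-u_k^{(1/2)}\|_1$ for each $k$, then use $u_i^{(1)}=\sum_k \alpha_{i,k}u_k^{(1/2)}$ to push this into a bound on $d_1(u_i^{(1)},V)$. Two small differences are worth noting.

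For Step~3 the paper takes a slightly more direct route than your $w_i=L^{-1}(\lambda_i^{(0)}u_i^{(0)})$ detour: it compares $u_i^{(1/2)}$ directly with $P_1 u_i$, observing that for $v\in V_1$,
\[
a(P_1 u_i - u_i^{(1/2)},v)=a(u_i,v)-\lambda_i^{(0)}(u_i^{(0)},v)=\lambda_i(u_i-u_i^{(0)},v)+(\lambda_i-\lambda_i^{(0)})(u_i^{(0)},v),
\]
so coercivity immediately gives $\|P_1 u_i - u_i^{(1/2)}\|_1\lesssim \|u_i-u_i^{(0)}\|_0+|\lambda_i-\lambda_i^{(0)}|$, and the triangle inequality with $\|u_i-P_1u_i\|_1\lesssim\inf_{v\in V_1}\|u_i-v\|_1$ finishes. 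This avoids your extra step of bounding $\inf_{v\in V_1}\|w_i-v\|_1$ by $\inf_{v\in V_1}\|u_i-v\|_1+\|w_i-u_i\|_1$, though of course both arrive at the same estimate.

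For Step~4 you are more scrupulous than the paper. The paper simply writes $u_i^{(1)}=\sum_k\alpha_{i,k}u_k^{(1/2)}$ and passes from $\sum_k\alpha_{i,k}(\cdots)$ to $\sum_k(\cdots)$ with no discussion of why the coefficients $\alpha_{i,k}$ are bounded. The gap you identify---that one needs a uniform lower bound on the Gram matrix of $\{u_k^{(1/2)}\}$, which amounts to an implicit smallness hypothesis on the initial data---is real, and the paper does not address it; it is absorbed into the $\lesssim$ without comment. Your diagnosis that this is the genuine obstacle, consistent with the authors' own remark that a fully rigorous convergence proof is not yet available, is accurate.
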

\begin{proof}
Let $P_1: H^1_0(\Omega)\longrightarrow V_1$ be the Galerkin
projection defined by (\ref{Gprojection}) when $V_h$ is replaced by
$V_1$. We see  for any $u\in H^1_0(\Omega)$ that
\begin{eqnarray*}
a(P_1 u_i - u_i^{(1/2)},v)
 =\lambda_i(u_i- u_i^{(0)},v)
+(\lambda_i - \lambda_i^{(0)})(u_i^{(0)}- ,v)\ \ \forall v\in V_1,
\end{eqnarray*}
which leads to
\begin{eqnarray*}
\|P_1 u_i - u_i^{(1/2)}\|_1&\lesssim\|u_i- u_i^{(0)}\|_0+|\lambda_i - \lambda_i^{(0)}|.
\end{eqnarray*}
We obtain from the triangle inequality that
\begin{eqnarray*}
\|u_i-u_i^{(1/2)}\|_1&\lesssim\|&u_i-P_1 u_i\|_1+\|P_1 u_i-u_i^{(1/2)}\|_1\\
    &\lesssim& \|u_i-P_1 u_i\|_1+\|u_i- u_i^{(0)}\|_0+|\lambda_i -
    \lambda_i^{(0)}|,
\end{eqnarray*}
where the first term on the right-hand side can be bounded by term
$\inf_{v\in V_1}\|u_i-v\|_1$.

Since  $u_i^{(1)} \in \tilde{V}_1 = span\{u_1^{(1/2)}, \cdots, u_N^{(1/2)}\}$, we have that there exist constants $\alpha_{i, k}(k=1, \cdots, N)$, such that
\begin{eqnarray*}
u_i^{(1)} = \sum_{k=1}^{N} \alpha_{i, k} u_k^{(1/2)}.
\end{eqnarray*}
Note that we may estimate as follows
\begin{eqnarray*}
\sum_{k=1}^{N} \alpha_{i, k} (u_k^{(1/2)} - u_k) &\lesssim &\sum_{k=1}^{N} \alpha_{i, k}  \Big(
\|u_k-P_1 u_k\|_1+\|u_k- u_k^{(0)}\|_0+|\lambda_k - \lambda_k^{(0)}|\Big) \\
 &\lesssim & \sum_{k=1}^{N} \Big(\|u_k-P_1 u_k\|_1+\|u_k- u_k^{(0)}\|_0+|\lambda_k - \lambda_k^{(0)}|\Big).
\end{eqnarray*}
Consequently,
\begin{eqnarray*}
 u_i^{(1)} - \sum_{k=1}^{N} \alpha_{i, k}  u_k  \lesssim  \sum_{k=1}^{N} \Big(\|u_k-P_1 u_k\|_1+\|u_k- u_k^{(0)}\|_0+|\lambda_k - \lambda_k^{(0)}|\Big),
\end{eqnarray*}
which means (\ref{conc-estimate-h1}) since $u=\sum_{k=1}^{N}
\alpha_{i, k}  u_k \in V$. This completes the proof.

\end{proof}

\small


\begin{thebibliography}{99}

\bibitem{adams75} R.A. Adams, {\em Sobolev Spaces}, Academic Press, New York,
1975.


\bibitem{agmon81} S. Agmon, {\em Lectures on the Exponential Decay
of Solutions of Second-Order Elliptic Operators}, Princeton
University Press, Princeton, 1981.



\bibitem{almbladh85} C.O. Almbladh and U. von Barth, {\em  Exact results for the charge and spin densities, exchange-correlation potentials,
and density-functional eigenvalues}, Phys. Rev. B, {\bf 31} (1985),  pp. 3231-3244.

\bibitem{andersen75}
O.K. Andersen, {\em Linear methods in band theory}, Phys. Rev. B, {\bf 12} (1975), pp. 3060-3083.



\bibitem{blochl94}
P.E. Bl{\" o}chl, {\em Projector augmented-wave method}, Phys. Rev. B, {\bf 50} (1994), pp. 17953-17979.

\bibitem{bowler-miyazaki12}
D.R. Bowler and T. Miyazaki, {\em $O(N)$ methods in electronic structure calculations},
Report Prog. Pgys., {\bf 75} (2012), pp. 036503-03645.





\bibitem{cascon-kreuzer-nochetto-siebert08}
J.M. Cascon,   C. Kreuzer, R.H. Nochetto,  and  K.G. Siebert,
{\em Quasi-optimal convergence rate for an adaptive finite element method},
SIAM J. Numer. Anal., {\bf 46} (2008), pp. 2524-2550.

\bibitem{chen-dai-gong-he-zhou13}
H. Chen, X. Dai, X. Gong, L. He, and A. Zhou, {\em Adaptive finite
element approximations for Kohn-Sham models}, arXiv:1302.6896, Multi. Model. Simul., to appear.


\bibitem{chen-he-zhou11}
H. Chen, L. He, and A. Zhou.
{\em Finite element approximations of nonlinear eigenvalue
problems in quantum physics}, Comput. Meth. Appl. Mech. Engrg.,
{\bf 200} (2011), pp.1846-1865.

\bibitem{dai-gong-yang-zhou-2011}
X. Dai, X. Gong, Z. Yang, D. Zhang and A. Zhou, {\em Finite volume
discretizations for eigenvalue problems with applications to electronic
structure calculations}, Mult. Model. Simul., {\bf 9}(2011), pp.208-240.

\bibitem{dai-shen-zhou08}
 X. Dai, L. Shen, and A. Zhou, {\em, A local computational scheme
 for higher order finite element eigenvalue approximations}, Int. J.
 Numer. Anal. Model., {\bf 5}(2008), pp. 570-589.


\bibitem{dai-xu-zhou08} X. Dai, J. Xu, and A. Zhou,
{\em Convergence and optimal complexity of adaptive finite element eigenvalue computations},
Numer. Math., {\bf 110} (2008), pp. 313-355.



\bibitem{daix-zhou08}
X. Dai and A. Zhou,
{\em Three-scale finite element discretizations for quantum eigenvalue problems},
SIAM J. Numer. Anal., {\bf 46} (2008), pp. 295-324.

\bibitem{dorfler96}
W. D{\" o}rfler, {\em A convergent adaptive
algorithm for Poisson's equation}. SIAM J. Numer. Anal., {\bf 33} (1996),  pp. 1106-1124.

\bibitem{garau-morin-zuppa09} E.M. Garau, P. Morin, and C. Zuppa, {\em
Convergence of adaptive finite element methods for eigenvalue
problems}, M$^3$AS, {\bf 19} (2009), pp.~721-747.

\bibitem{gaussian} Gaussian, http://gaussian.com/.


\bibitem{hohenberg-kohn64}
P. Hohenberg and W. Kohn, {\em Inhomogeneous electron gas}, Phys. Rev. B.,
{\bf 136}(1964), pp. 864-871.

\bibitem{kaxiras03}
E. Kaxiras, {\em Atomic and Electronic Structure of Solids}, Cambridge University
Press, Cambridge, UK, 2003.

\bibitem{kohn-sham65}
W. Kohn and L. J. Sham, {\em Self-consistent equations including exchange
and correlation effects}, Phys. Rev. A., {\bf 140}(1965), pp. 4743-4754.

\bibitem{kresse-furthmuller-96}
G. Kresse and J. Furthm\"{u}ller, {\em Efficient iterative schemes for ab
initio total-energy calculations using a plane-wave basis set}, Phys. Rev. B.,
{\bf 54}(1996), pp. 11169-11186.

\bibitem{lebris03}
 C. Le Bris (ed.),  {\em Handbook of Numerical
Analysis, Vol. X. Special issue: Computational Chemistry},
North-Holland, Amsterdam, 2003.



\bibitem{losilla-sundholm12}
S.A. Losilla and D. Sundholm, {\em A divide and conquer real-space approach
for all-electron molecular electrostatic potentials and interaction energies}, J. Chem. Phys.,
{\bf 136} (2012), pp. 214104-214103.

\bibitem{martin04}
R. Martin, {\em Electronic Structure: Basic Theory and Practical Methods},
Cambridge university Press, London, 2004.



\bibitem{payne-92} M. C. Payne, M. P. Teter, D. C. Allan, T. A. Arias, and J. D. Joannopoulos, {\em Iterative
minimization techniques for ab initio total-energy calculations: molecular dynamics and
conjugate gradients}, Rev. Mod. Phys., {\bf 64} (1992), pp.1045-1097.

\bibitem{parr-yang-94}
R. G. Parr and W. T. Yang, {\em Density-Functional Theory of Atoms
and Molecules}, Clarendon Press, Oxford, 1994.


\bibitem{perdew-zunger-81} J. P. Perdew and A. Zunger, {\em Self-interaction correction to density-functional approximations for many-electron systems}, Phys. Rev. B, {\bf 23}(1981), pp. 5048-5079.

\bibitem{phg} PHG,{\em http://lsec.cc.ac.cn/phg/.}


%
%

%

\bibitem{troullier-martins91}
N. Troullier and J.L. Martins, {\em Efficient pseudopotentials for plane-wave calculations},
Phys. Rev. B, {\bf 43} (1991), pp. 1993-2006.

\bibitem{vanderbilt90}
D. Vanderilt, {\em Soft slef-consistent pseudopotentials in a generalized eigenvalue formalism},
Phys. Rev. B, {\bf 41} (1990), pp. 7892-7895.


%
%
%


\bibitem{yesrentant10} H. Yserentant, {\em Regularity and Approximability of Electronic Wave Functions},
 Lecture Notes in Mathematics, {\bf 2000}, Springer-Verlag, Berlin, 2010.


\end{thebibliography}
\end{document}